\begin{document}
\newtheorem{theo}{Theorem}
\newtheorem{defin}[theo]{Definition}
\newtheorem{rem}[theo]{Remark}
\newtheorem{lem}[theo]{Lemma}
\newtheorem{cor}[theo]{Corollary}
\newtheorem{prop}[theo]{Proposition}
\newtheorem{exa}[theo]{Example}
\newtheorem{exas}[theo]{Examples}
%
%
\subjclass[2010]{Primary 35J60  Secondary 35J50}
\keywords{Indefinite functional; Variant fountain theorem; Superquadratic nonlinearity.}
\thanks{}
\title[On a superquadratic elliptic system]{On a superquadratic elliptic system with strongly indefinite structure}

\author[C. J. Batkam]{Cyril Joel Batkam}
\address{Cyril J. Batkam \newline
D\'epartement de math\'ematiques,
\newline
Universit\'e de Sherbrooke,
\newline
Sherbrooke, (Qu\'ebec),
\newline
J1K 2R1, CANADA.}
\email{cyril.joel.batkam@usherbrooke.ca}

\maketitle
\begin{abstract}
In this paper, we consider the elliptic system
\begin{equation*}
    \left\{
      \begin{array}{ll}
        -\Delta u=g(x,v)\,\, \textnormal{in }\Omega, & \hbox{} \\
        -\Delta v=f(x,u)\,\,\textnormal{in }\Omega, & \hbox{} \\
        u=v=0\textnormal{ on }\partial\Omega, & \hbox{}
      \end{array}
    \right.
\end{equation*}
where $\Omega$ is a bounded smooth domain in $\mathbb{R}^N$, and $f$ and $g$ satisfy a general superquadratic condition. By using variational methods, we prove the existence of infinitely many  solutions. Our argument relies on the application of a generalized variant fountain theorem for strongly indefinite functionals. Previous results in the topic are improved.
\end{abstract}

%
\section{Introduction}
In this article, we study the existence of multiple solutions of certain superquadratic elliptic systems of the form
\begin{equation}\label{s}\tag{S}
    \left\{
      \begin{array}{ll}
        -\Delta u=g(x,v)\,\, \textnormal{in }\Omega, & \hbox{} \\
        -\Delta v=f(x,u)\,\,\textnormal{in }\Omega, & \hbox{} \\
        u=v=0\textnormal{ on }\partial\Omega, & \hbox{}
      \end{array}
    \right.
\end{equation}
where $\Omega$ is a bounded smooth domain in $\mathbb{R}^N$, $N\geq3$, and the functions $f,g:\Omega\times\mathbb{R}\to\mathbb{R}$ are continuous and superlinear. Such systems describe steady state  solutions of reaction-diffusion and hydrodynamical problems. The difficulties in studying \eqref{s} originate mainly in two facts. First, the associated energy functional is strongly indefinite, in the sense that it is neither bounded from above nor from below, even on subspaces of finite dimension or codimension. Therefore, the usual critical point theorems cannot be applied. Second, due to the growth conditions on $f$ and $g$ below, the energy functional associated with \eqref{s} is not defined on the Sobolev space $H_0^1(\Omega)\times H_0^1(\Omega)$. We will use fractional Sobolev spaces in order to apply variational methods.
\par Elliptic systems leading to strongly indefinite functionals have been studied by many authors. See, for instance \cite{Ba-Cla,H-V,Figue-Fel,S-W1,BenRabi,Lupo,F-D} and the references therein. In a recent paper \cite{S-W1}, Szulkin and Weth  considered \eqref{s} with $f$ and $g$ both subcritical and odd, and they assumed among others that the mappings
$u\mapsto f(x,u)/|u|$ and $u\mapsto g(x,u)/|u|$ are strictly increasing in $(-\infty,0)\cup(0,+\infty)$. By developing a Nehari manifold method for strongly indefinite functionals, they obtained the existence of infinitely many solutions. We recall that if both $f$ and $g$ are subcritical, then the energy functional associated to \eqref{s} is well defined on the space $E=H_0^1(\Omega)\times H_0^1(\Omega)$, and has the form
\begin{equation*}
    J(u)=\frac{1}{2}\|u^+\|^2-\frac{1}{2}\|u^-\|^2-\int_\Omega\Big(F(x,u_1)+G(x,u_2)\Big)dx,
\end{equation*}
where
\begin{equation*}
    E=E^+\oplus E^-\ni u=(u_1,u_2)=u^++u^-,\quad u^\pm\in E^\pm=\{u\in E\,;\, u_2=\pm u_1\}.
\end{equation*}
Let the following set introduced by Pankov in \cite{Pan}.
\begin{equation*}
    \mathcal{M}=\big\{w\in E\backslash E^-\,|\, \big<J'(w),w\big>=0\text{ and }\big<J'(w),z=0\big>=0\,\,\forall z\in E^-\big\}.
\end{equation*}
The argument in \cite{S-W1} relies on the observation that for every $u\in E\backslash E^-$, the set $E^-\oplus \mathbb{R}^+u$ intersects $\mathcal{M}$ at exactly one point, namely $\widehat{m}(u)$. This allows the authors to reduce the problem on the manifold $\mathcal{M}$, and then on the unit sphere $S^+$ of $E^+$, where they can applied a classical multiplicity critical point theorem. If $u\mapsto f(x,u)/|u|$ or $u\mapsto g(x,u)/|u|$ is not strictly increasing in $(-\infty,0)\cup(0,+\infty)$, then $\widehat{m}(u)$ need not be unique, and their argument collapses.
\par The main goal of this paper is to extend the result of \cite{S-W1} by considering more general growth conditions on $f$ and $g$, and by only requiring the above mappings to be increasing. Our precise assumptions on $f$ and $g$ are the following.
\begin{enumerate}
  \item [$(H_1)$]$f,g\in\mathcal{C}(\Omega\times\mathbb{R})$ and there is a constant $C>0$ such that
\begin{equation*}
    |f(x,u)|\leq C(1+|u|^{p-1}) \textnormal{ and } |g(x,u)|\leq C(1+|u|^{q-1}),
\end{equation*}
for all $(x,u)$, where $p,q>2$ satisfy
\begin{equation*}
    \frac{1}{p}+\frac{1}{q}>1-\frac{2}{N}.
\end{equation*}
Furthermore, in case $N\geq5$ we impose
\begin{equation*}
    \frac{1}{p}>\frac{1}{2}-\frac{2}{N}\quad\textnormal{and}\quad \frac{1}{q}>\frac{1}{2}-\frac{2}{N}.
\end{equation*}
  \item [$(H_2)$] $\frac{1}{2}uf(x,u)\geq F(x,u)\geq0\text{ and } \frac{1}{2}ug(x,u)\geq G(x,u)\geq0,\,\, \forall (x,u).$ \\
  \item [$(H_3)$] $F(x,u)/u^2\to\infty$ and $G(x,u)/u^2\to\infty$ uniformly in $x$ as $|u|\to\infty$.\\
  \item [$(H_4)$]  $u\mapsto f(x,u)/|u|$ and $u\mapsto g(x,u)/|u|$ are increasing in $(-\infty,0)\cup(0,+\infty)$.\\
  \item [$(H_5)$] $f(x,-u)=-f(x,u)$ and $g(x,-u)=-g(x,u)$ for all $(x,u)$.
\end{enumerate}
Before we state our main result, we recall the following definition.
\begin{defin}
We say that $(u,v)$ is a strong solution of \eqref{s} if $u\in W^{2,p/(p-1)}(\Omega)\cap W_0^{1,p/(p-1)}(\Omega)$, $v\in W^{2,q/(q-1)}(\Omega)\cap W_0^{1,q/(q-1)}(\Omega)$ and $(u,v)$ satisfies
\begin{equation*}
    \left\{
      \begin{array}{ll}
        -\Delta u=g(x,v)\,\, \textnormal{ a.e. in }\Omega, & \hbox{} \\
        -\Delta v=f(x,u)\,\,\textnormal{ a.e. in }\Omega. & \hbox{}
        \end{array}
    \right.
\end{equation*}
\end{defin}
The main result of the paper is the following.
\begin{theo}\label{mainresult}
Under assumptions $(H_1)-(H_5)$, \eqref{s} has infinitely many pairs of strong solutions $\pm(u_k,v_k)$ such that $\|u_k\|_{L^\infty(\Omega)}\to\infty$ or $\|v_k\|_{L^\infty(\Omega)}\to\infty$, as $k\to\infty$.
\end{theo}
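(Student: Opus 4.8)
The plan is to recast \eqref{s} variationally in a fractional Sobolev framework, in which it becomes the problem of finding critical points of a strongly indefinite, $\mathbb{Z}_2$-invariant functional, and then to apply the generalized variant fountain theorem.

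First I would fix the functional setting. Since $(H_1)$ allows $f$ and $g$ to grow too fast for $H_0^1(\Omega)$, I would pick an exponent $s\in(0,2)$ for which the embeddings $H_0^s(\Omega)\hookrightarrow L^p(\Omega)$ and $H_0^{2-s}(\Omega)\hookrightarrow L^q(\Omega)$ are both continuous and compact; the inequality $\frac1p+\frac1q>1-\frac2N$ (with the two extra conditions when $N\geq5$) is precisely what makes such an $s$ available and what makes the pairing obtained by splitting the order of the Laplacian into $s$ and $2-s$ meaningful. Writing $A=-\Delta$ with Dirichlet conditions, on $E=H_0^s(\Omega)\times H_0^{2-s}(\Omega)$ the energy functional is $\Phi(u,v)=\int_\Omega A^{s/2}u\,A^{(2-s)/2}v\,dx-\int_\Omega\big(F(x,u)+G(x,v)\big)\,dx$, which is of class $C^1$ by $(H_1)$ and whose critical points are the weak solutions of \eqref{s}. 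Diagonalizing the indefinite quadratic part in the eigenbasis of $A$ gives an orthogonal splitting $E=E^+\oplus E^-$ with both summands infinite-dimensional, so $\Phi$ is strongly indefinite, and $(H_5)$ makes it even.

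Next I would verify the hypotheses of the variant fountain theorem. Choosing a Hilbert basis $(e_j)$ of $E^+$ and setting $Y_k=E^-\oplus\mathrm{span}\{e_1,\dots,e_k\}$ and $Z_k=\overline{\mathrm{span}}\{e_j:j\geq k\}$, I would introduce the parametrized family $\Phi_\mu(z)=\frac12\|z^+\|^2-\frac12\|z^-\|^2-\mu\Psi(z)$, where $\Psi(z)=\int_\Omega\big(F(x,u)+G(x,v)\big)\,dx\geq0$ by $(H_2)$ and $\mu\in[1,2]$, as required by the theorem. Its two defining inequalities follow from the growth hypotheses: $(H_3)$ forces $\Phi_\mu\leq0$ on the boundary of a large ball of each $Y_k$, while the subcriticality in $(H_1)$, through the decay to zero of the embedding constants of $Z_k$ into $L^p$ and $L^q$, yields on a small sphere of $Z_k$ a positive infimum that tends to $+\infty$ with $k$. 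The theorem then provides, for almost every $\mu\in[1,2]$ and every large $k$, a critical point $z_k(\mu)$ of $\Phi_\mu$ at a level $c_k(\mu)$, with $c_k(\mu)\to\infty$ as $k\to\infty$.

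The main obstacle, and the place where $(H_2)$ and $(H_4)$ are essential, is recovering a critical point of the original $\Phi=\Phi_1$. Because we assume only a general superquadratic condition and mere monotonicity, we cannot establish the boundedness of Palais--Smale sequences of $\Phi$ directly, which is exactly the difficulty the parameter $\mu$ is introduced to bypass. For a fixed $k$ I would take $\mu_n\to1$ and the corresponding critical points $z_n:=z_k(\mu_n)$. Testing $\Phi_{\mu_n}'(z_n)=0$ against $z_n$ and using $(H_2)$ gives $\Phi_{\mu_n}(z_n)=\mu_n\int_\Omega\big[(\tfrac12 u_nf(x,u_n)-F(x,u_n))+(\tfrac12 v_ng(x,v_n)-G(x,v_n))\big]\,dx\geq0$, and since the levels are bounded above by $c_k(1)$ up to a constant, this integral stays bounded; combining this with $(H_3)$, which makes the integrand blow up where $|u_n|$ or $|v_n|$ is large, and with the monotonicity $(H_4)$, which shows $\tfrac12 uf(x,u)-F(x,u)$ is nondecreasing in $|u|$, rules out $\|z_n\|\to\infty$. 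Hence $\{z_n\}$ is bounded, and passing to the limit along the compact embeddings yields a critical point $\widehat z_k=(u_k,v_k)$ of $\Phi$ with energy bounded below by a level diverging in $k$; by $(H_5)$ these occur as pairs $\pm\widehat z_k$. Finally, elliptic $L^t$-estimates and a bootstrap argument upgrade each weak solution to a strong solution in the stated spaces, and the divergence of $\Phi(\widehat z_k)$ together with the bounds from $(H_1)$--$(H_2)$ forces $\|u_k\|_{L^\infty(\Omega)}\to\infty$ or $\|v_k\|_{L^\infty(\Omega)}\to\infty$.
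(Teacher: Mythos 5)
Your overall strategy is indeed the paper's (fractional spaces $E^s\times E^t$, the Batkam--Colin variant fountain theorem, the monotonicity trick in a parameter, then a limit as the parameter tends to $1$), but the two load-bearing steps do not work as you have set them up. The first problem is your choice of the parametrized family. You take $\Phi_\mu(z)=\frac12\|z^+\|^2-\frac12\|z^-\|^2-\mu\Psi(z)$, i.e.\ $L(z)=\frac12\|z^+\|^2-\frac12\|z^-\|^2$ and $J(z)=\Psi(z)$. Hypothesis $(A_2)$ of Theorem \ref{vft} requires $J\geq0$ (fine, by $(H_2)$) \emph{and} that $L(z)\to\infty$ or $J(z)\to\infty$ as $\|z\|\to\infty$. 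This fails for your splitting: along $E^-$ one has $L(z)=-\frac12\|z\|^2\to-\infty$, and $J$ need not blow up, because the embeddings $E^s\hookrightarrow L^p(\Omega)$ and $E^t\hookrightarrow L^q(\Omega)$ are compact, so there exist $u_n$ with $\|u_n\|_s\to\infty$ while $|u_n|_p+|A^{s-t}u_n|_q\to0$ (otherwise the unit ball of $E^s$ would be compact). Taking $F(x,u)=|u|^p$, $G(x,v)=|v|^q$, which satisfy $(H_1)$--$(H_5)$, gives an unbounded sequence in $E^-$ along which $J\to 0$ and $L\to-\infty$. The paper's family \eqref{philambda} puts $\lambda$ in front of the \emph{whole} negative part, $\Phi_\lambda=\frac12\|Qz\|^2_{s\times t}-\lambda\big[\frac12\|Pz\|^2_{s\times t}+\Psi(z)\big]$, so that $L(z)=\frac12\|Qz\|^2_{s\times t}$, $J(z)=\frac12\|Pz\|^2_{s\times t}+\Psi(z)\geq0$, and $\|z\|\to\infty$ forces $L\to\infty$ or $J\to\infty$. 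This is not a cosmetic difference; it is exactly what makes the cited theorem applicable.

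The second and more serious gap is the boundedness of the critical points $z_n=z_k(\mu_n)$ as $\mu_n\to1$. From $\Phi_{\mu_n}'(z_n)=0$ you correctly obtain that $\int_\Omega\big[(\frac12 u_nf(x,u_n)-F(x,u_n))+(\frac12 v_ng(x,v_n)-G(x,v_n))\big]dx$ stays bounded, and it is true that $(H_3)$--$(H_4)$ force $H(x,u):=\frac12 uf(x,u)-F(x,u)$ to tend to $\infty$ as $|u|\to\infty$. But a bound on $\int_\Omega H(x,u_n)dx$ only controls the measure of the sets where $|u_n|$ is large; it gives no control on $\|u_n\|_s$. A sequence that is bounded in $L^\infty(\Omega)$ but unbounded in $E^s$ (high-frequency oscillation) keeps this integral bounded, so ``combining with $(H_3)$ and $(H_4)$ rules out $\|z_n\|\to\infty$'' is not an argument --- it is precisely the statement the paper points out is unknown without \eqref{AR}. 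The paper's actual proof is a contradiction argument on the normalized sequence $w_k^n=z_k^n/\|z_k^n\|_{s\times t}$: if $w_k^n\rightharpoonup w_k\neq0$, then $(H_3)$ and Fatou's lemma give a contradiction; if $w_k=0$, one needs the key comparison Lemma \ref{comparaison}, which states that at any nonzero critical point $z_\lambda$ of $\Phi_\lambda$ one has $\Phi_\lambda(z_\lambda+w)\leq\Phi_\lambda(z_\lambda)$ for every $w=rz_\lambda+\theta$ with $r\geq-1$, $\theta\in Y$. That lemma is where $(H_4)$ really enters, via Liu's perturbation $f_\varepsilon(x,u)=f(x,u)+\varepsilon u^3$ (to restore strict monotonicity) and the Szulkin--Weth inequality; it allows the comparison $\Phi_{\lambda_n}(z_k^n)\geq\Phi_{\lambda_n}(rQw_k^n)$, which for $n\to\infty$ yields $c_k(2)\geq\frac12\alpha^2r^2$ for all $r>0$, a contradiction. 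This lemma (or some substitute for it) is entirely missing from your proposal, and without it the passage from ``critical points of $\Phi_\mu$ for a.e.\ $\mu$'' to ``critical point of $\Phi=\Phi_1$'' does not go through.
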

\par As far as we know, Theorem \ref{mainresult} is new under assumptions $(H_1)-(H_5)$. In \cite{F-W}, Felmer and Wang obtained the same result by using the Ambrosetti-Rabinowitz superquadratic condition: $\exists\gamma_1,\gamma_2>2$ and $\exists R>0$ such that
\begin{equation}\label{AR}\tag{AR}
     0<\gamma_1 F(x,u)\leq uf(x,u)\text{ and } 0<\gamma_2 G(x,u)\leq ug(x,u)\text{ for }|u|>R.
\end{equation}
They reduced the problem to a semi-finite situation, and used condition $(AR)$ to verify that the energy functional satisfies a strong version of the usual Palais-Smale condition, which was crucial for their argument. We recall that \eqref{AR} implies $f(x,u)\geq c|u|^{\gamma_1}$ and $g(x,u)\geq c|u|^{\gamma_2}$ for $|u|$ large, hence it is stronger than $(H_3)$. We point out that without \eqref{AR} we do not know if any Palais-Smale sequence of the energy functional is bounded. In this paper we do not use any reduction method. Our approach relies on a generalized variant fountain theorem for strongly indefinite functionals, established by the author and Colin in \cite{B-C}, which combines the $\tau$-topology introduced by Kryszewski and Szulkin \cite{K-S}, with the idea of the monotonicity trick developed by Jeanjean \cite{LJJ}. It also has the advantage that it produces bounded Palais-Smale sequences of the energy functional.
\par In \cite{H-V}, Hulshof and van der Vorst obtained the existence of at least one nontrivial solution of \eqref{s} under condition \eqref{AR}, which
 was mainly used to verify that the energy functional has a linking geometry in the sense of Benci and Rabinowitz \cite{BenRabi}, and also satisfies the Palais-Smale condition. A similar result was obtained independently by de Figueiredo and Felmer in \cite{Figue-Fel}.
\par The paper is organized as follows. Section \ref{section2} contained the variational framework for the study of \eqref{s}. The proof of Theorem \ref{mainresult} will be given in section \ref{section3}. In section \ref{section4}, we state a similar result concerning an indefinite semilinear elliptic equation.
\par Through the paper, $|\cdot|_r$ denotes the usual norm of the Lebesgue space $L^r(\Omega)$.

\section{Variational setting}\label{section2}

Consider the Laplacian as the operator
\begin{equation*}
    -\Delta:H^2(\Omega)\cap H_0^1(\Omega)\subset L^2(\Omega)\to L^2(\Omega),
\end{equation*}
and let $(\varphi_j)_{j\geq1}$ a corresponding system of orthogonal and $L^2(\Omega)$-normalized eigenfunctions, with eigenvalues $(\lambda_j)_{j\geq1}$. Then, writing
\begin{equation*}
    u=\sum\limits_{j=1}^\infty a_j\varphi_j,\quad \textnormal{with } a_j=\int_\Omega u\varphi_j dx,
\end{equation*}
we set, for $0\leq s\leq2$
\begin{equation*}
    E^s:=\big\{u\in L^2(\Omega)\, \big|\,\sum\limits_{j=1}^\infty\lambda_j^s|a_j|^2<\infty \big\}
\end{equation*}
and
\begin{equation*}
    A^s(u):=\sum\limits_{j=1}^\infty\lambda_j^{s/2}a_j\varphi_j,\quad \forall u\in D(A^s)=E^s.
\end{equation*}
One can verify easily that $A^s$ is an isomorphism onto $L^2(\Omega)$. We denote $A^{-s}:=(A^s)^{-1}$. It is well known (see Lions-Magenes \cite{Lions-Ma}) that the space $E^s$ is a fractional Sobolev space with the inner product
\begin{equation*}
    \big<u,v\big>_s=\int_\Omega A^suA^svdx.
\end{equation*}
We refer to the paper of Persson \cite{Per} for the proof of the following lemma.
\begin{lem}\label{compactness}
$E^s$ embeds continuously in $L^r(\Omega)$ for $s>0$ and $r\geq1$ satisfying $\frac{1}{r}\geq\frac{1}{2}-\frac{s}{N}.$ Moreover, the embedding is compact in the case of strict inequality.
\end{lem}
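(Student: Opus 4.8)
The plan is to recognise $E^{s}$ as the domain of the fractional power $A^{s}=(-\Delta)^{s/2}$, endowed with the norm $\|u\|_{s}=|A^{s}u|_{2}=\big(\sum_{j}\lambda_{j}^{s}|a_{j}|^{2}\big)^{1/2}$, and to transfer the embedding onto the inverse operator. Since $A^{s}$ is an isometric isomorphism of $E^{s}$ onto $L^{2}(\Omega)$, every $u\in E^{s}$ is of the form $u=A^{-s}w$ with $w=A^{s}u$ and $|w|_{2}=\|u\|_{s}$; hence the continuous embedding $E^{s}\hookrightarrow L^{r}(\Omega)$ is equivalent to the boundedness of
\[
A^{-s}=(-\Delta)^{-s/2}\colon L^{2}(\Omega)\longrightarrow L^{r}(\Omega).
\]
Because $\Omega$ is bounded, I would reduce everything to the borderline exponent $r^{\ast}$ given by $\tfrac{1}{r^{\ast}}=\tfrac12-\tfrac{s}{N}$ when $s<N/2$ (and to mapping into $L^{\infty}$ when $s\ge N/2$): every admissible $r$ with $\tfrac1r\ge\tfrac12-\tfrac sN$ then follows from $L^{r^{\ast}}(\Omega)\hookrightarrow L^{r}(\Omega)$.

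To obtain the borderline estimate I would use that $\{E^{s}\}_{0\le s\le2}$ is a complex interpolation scale: for a nonnegative self-adjoint operator the domains of its fractional powers satisfy $[D(A^{s_{0}}),D(A^{s_{1}})]_{\theta}=D(A^{(1-\theta)s_{0}+\theta s_{1}})$, so with $A^{2}=-\Delta$ one has $E^{s}=[L^{2}(\Omega),E^{2}]_{s/2}$ with equivalent norms. The two endpoints are classical: $E^{0}=L^{2}(\Omega)$ embeds into itself, while $E^{2}=H^{2}(\Omega)\cap H_{0}^{1}(\Omega)$ embeds into $L^{r_{2}}(\Omega)$ with $\tfrac{1}{r_{2}}=\tfrac12-\tfrac2N$ by the second-order Sobolev theorem (into every $L^{r}$, resp. $L^{\infty}$, in low dimension). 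Interpolating the embedding operator between these endpoints via $[L^{2},L^{r_{2}}]_{\theta}=L^{r_{\theta}}$ with $\tfrac{1}{r_{\theta}}=\tfrac{1-\theta}{2}+\tfrac{\theta}{r_{2}}$, the exponents match exactly: for $s=2\theta$ one finds $\tfrac{1}{r_{\theta}}=\tfrac12-\tfrac{s}{N}$, giving the desired continuous embedding. A self-contained alternative would be to write $A^{-s}f=\Gamma(s/2)^{-1}\int_{0}^{\infty}t^{s/2-1}e^{t\Delta}f\,dt$ and to insert the ultracontractivity bound $|e^{t\Delta}f|_{r}\le Ct^{-\frac N2(\frac12-\frac1r)}|f|_{2}$, which comes from the Gaussian estimate for the Dirichlet heat kernel.

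The compactness under the strict inequality $\tfrac1r>\tfrac12-\tfrac sN$ then separates off cleanly. Since $\lambda_{j}\to\infty$, the embedding $E^{s}\hookrightarrow E^{s'}$ is compact whenever $s>s'\ge0$: for a bounded sequence in $E^{s}$ the tail $\sum_{j>J}\lambda_{j}^{s'}|a_{j}|^{2}\le\lambda_{J}^{-(s-s')}\sum_{j>J}\lambda_{j}^{s}|a_{j}|^{2}$ is uniformly small, and a diagonal extraction on the finitely many remaining coordinates yields a convergent subsequence. By strictness I can pick $s'\in[0,s)$ with $\tfrac1r\ge\tfrac12-\tfrac{s'}{N}$, so that $E^{s'}\hookrightarrow L^{r}(\Omega)$ is the already established continuous embedding (or simply $L^{2}\hookrightarrow L^{r}$ on the bounded $\Omega$ when $s'=0$); composing it with the compact embedding $E^{s}\hookrightarrow E^{s'}$ gives compactness. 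The main obstacle is the borderline continuous embedding: it rests either on the interpolation-of-domains identity $E^{s}=[L^{2},E^{2}]_{s/2}$ together with the dimension-dependent endpoint embedding of $E^{2}$, or, on the heat-semigroup route, on the ultracontractive Gaussian bound for $e^{t\Delta}$, whose endpoint integration in the subordination formula is the delicate point.
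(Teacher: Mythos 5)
Your reduction of the compact case to the continuous case is correct and pleasantly elementary: the spectral tail estimate does show that $E^s\hookrightarrow E^{s'}$ is compact whenever $s>s'\geq0$, and composing with a continuous embedding $E^{s'}\hookrightarrow L^r$ (available by strictness of the inequality) yields compactness. This is a genuine alternative to what the paper does, since the paper offers no proof at all but refers to Persson \cite{Per}, i.e.\ to an abstract theorem on compactness of maps between interpolation spaces; your tail-plus-diagonal argument replaces that machinery. The interpolation identity $E^s=[L^2,E^2]_{s/2}$ and your exponent bookkeeping are also correct --- but only in dimensions $N\geq5$, where $E^2=H^2(\Omega)\cap H_0^1(\Omega)$ embeds into $L^{r_2}(\Omega)$ with $\frac{1}{r_2}=\frac12-\frac2N>0$.

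The genuine gap is the ``low dimension'' case, which you dispose of in a parenthesis. For $N=3,4$ one has $\frac12-\frac2N\leq0$, so the $s=2$ endpoint is capped at $L^\infty$ (for $N=3$), resp.\ at every finite $L^{r_2}$ (for $N=4$), and interpolating against a capped endpoint does \emph{not} reproduce the Sobolev line $\frac1r=\frac12-\frac{s}{N}$: for $N=3$, $[L^2,L^\infty]_{s/2}=L^{4/(2-s)}$, i.e.\ your scheme only yields $\frac1r\geq\frac12-\frac{s}{4}$, not $\frac12-\frac{s}{3}$, so the claim ``the exponents match exactly'' fails. Concretely, for $N=3$, $s=1$ you obtain $E^1=H_0^1(\Omega)\hookrightarrow L^4(\Omega)$ and nothing better, whereas the lemma claims --- and the paper actually uses, e.g.\ with $p=q=5$, $s=t=1$ in \eqref{pq} --- the compact embedding $H_0^1(\Omega)\hookrightarrow L^5(\Omega)$. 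Your compactness step cannot rescue this, since it needs the continuous embedding at some $s'<s$, and lowering $s'$ only shrinks the admissible range; for $N=4$ the borderline equality case is likewise missed. Two repairs: for $s\leq1$ interpolate instead between $E^0=L^2(\Omega)$ and $E^1=H_0^1(\Omega)\hookrightarrow L^{2N/(N-2)}(\Omega)$, whose exponents do lie on the Sobolev line (this fixes the example above); for the full range $0<s<N/2$ in every dimension, prove the borderline bound $A^{-s}:L^2(\Omega)\to L^{r^*}(\Omega)$, $\frac{1}{r^*}=\frac12-\frac{s}{N}$, from the kernel domination $0\leq(-\Delta_D)^{-s/2}(x,y)\leq C|x-y|^{s-N}$ (Gaussian domination of the Dirichlet heat kernel plus subordination) together with the Hardy--Littlewood--Sobolev inequality. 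Note finally that your heat-semigroup alternative suffers the same limitation you already suspect: at equality the integral $\int_0^1 t^{s/2-1}\,t^{-\frac N2(\frac12-\frac1r)}\,dt$ diverges, so that route too only gives the strict case.
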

By assumption $(H_1)$, there exist $s,t>0$ such that $s+t=2$ and
\begin{equation}\label{pq}
    \frac{1}{p}>\frac{1}{2}-\frac{s}{N}\quad \textnormal{and}\quad \frac{1}{q}>\frac{1}{2}-\frac{t}{N}.
\end{equation}
We consider the functional
\begin{equation*}
    \Phi(u,v):=\int_\Omega A^suA^tvdx-\int_\Omega\Big(F(x,u)+G(x,v)\Big)dx,\quad (u,v)\in E^s\times E^t.
\end{equation*}
It follows from Lemma \ref{compactness} that the inclusions $E^s\hookrightarrow L^p(\Omega)$ and  $E^t\hookrightarrow L^q(\Omega)$ are continuous. This, together with the estimate
\begin{equation*}
   \Big|\int_\Omega A^suA^tvdx\Big|\leq  |A^su|_2|A^tu|_2=\|u\|_s\|v\|_t,
\end{equation*}
imply that the functional $\Phi$ above is well defined on $E:=E^s\times E^t$.\\
Now a standard argument shows that if assumption $(H_1)$ holds, then the functional $\Phi$ is of class $\mathcal{C}^1$ on $E$.\\
We say that $(u,v)\in E^s\times E^t$ is a weak solution of \eqref{s} if
\begin{multline*}
    \int_\Omega \big(A^suA^tk+A^shA^tv\big)dx-\int_\Omega\Big(hf(x,u)+kg(x,v)\Big)dx=0,\quad\forall(h,k)\in E^s\times E^t.
\end{multline*}
In order to recuperate from the critical points of the functional $\Phi$ (strong) solutions of \eqref{s}, we need the following regularity result due to de Figueiredo and Felmer \cite{Figue-Fel}.
\begin{lem}
If $(u,v)\in E^s\times E^t$ is a weak solution of \eqref{s}, then $(u,v)$ is a strong solution of \eqref{s}.
\end{lem}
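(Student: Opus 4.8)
The plan is to decouple the weak formulation into two independent scalar problems for the Dirichlet Laplacian, to read off the integrability of their right-hand sides from $(H_1)$, and then to invoke $L^p$ elliptic regularity together with a uniqueness argument to identify the abstract weak solution with an honest pointwise solution. Setting first $h=0$ and then $k=0$ in the definition of a weak solution yields, for all $k\in E^t$ and all $h\in E^s$,
\begin{equation*}
\int_\Omega A^s u\,A^t k\,dx=\int_\Omega k\,g(x,v)\,dx,\qquad \int_\Omega A^s h\,A^t v\,dx=\int_\Omega h\,f(x,u)\,dx .
\end{equation*}
Since $u\in E^s\hookrightarrow L^p(\Omega)$ and $v\in E^t\hookrightarrow L^q(\Omega)$ by Lemma \ref{compactness}, assumption $(H_1)$ gives $|f(x,u)|^{p/(p-1)}\le C(1+|u|^p)$ and $|g(x,v)|^{q/(q-1)}\le C(1+|v|^q)$, so that $f(x,u)\in L^{p/(p-1)}(\Omega)$ and $g(x,v)\in L^{q/(q-1)}(\Omega)$. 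These are the natural source terms, and the dual exponents $p/(p-1),\,q/(q-1)\in(1,2)$ lie in the admissible range for the Calder\'on--Zygmund theory on the smooth bounded domain $\Omega$.

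The key step is to recast the first identity as a transposition (very weak) formulation of $-\Delta u=g(x,v)$. Here I would exploit that $A^s$ and $A^t$ are self-adjoint, commute, and satisfy $A^sA^t=A^{s+t}=A^2=-\Delta$ on $E^2=H^2(\Omega)\cap H_0^1(\Omega)$, all of which follow directly from the spectral decomposition in the eigenbasis $(\varphi_j)$. For a test function $k\in E^2$ one checks from that decomposition that $A^t k\in E^s$, whence, by self-adjointness of $A^s$,
\begin{equation*}
\int_\Omega A^s u\,A^t k\,dx=\int_\Omega u\,A^s A^t k\,dx=\int_\Omega u\,(-\Delta k)\,dx .
\end{equation*}
Combined with the first identity, this shows $\int_\Omega u\,(-\Delta k)\,dx=\int_\Omega k\,g(x,v)\,dx$ for every $k\in H^2(\Omega)\cap H_0^1(\Omega)$, and symmetrically $v$ satisfies the analogous identity with source $f(x,u)$. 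Thus $u$ and $v$ are very weak solutions of the two Dirichlet problems with right-hand sides $g(x,v)$ and $f(x,u)$.

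It then remains to promote these very weak solutions to strong ones. Since $g(x,v)\in L^{q/(q-1)}(\Omega)$, $L^p$ elliptic regularity for the Dirichlet Laplacian furnishes a strong solution $\widetilde u\in W^{2,q/(q-1)}(\Omega)\cap W_0^{1,q/(q-1)}(\Omega)$ of $-\Delta\widetilde u=g(x,v)$, and any such strong solution is in particular a very weak solution. To conclude $u=\widetilde u$ I would use a duality argument: the difference $w=u-\widetilde u$ satisfies $\int_\Omega w\,(-\Delta k)\,dx=0$ for all admissible $k$; given $\psi\in C_c^\infty(\Omega)$, solving $-\Delta k=\psi$ with $k\in W^{2,\sigma'}(\Omega)\cap W_0^{1,\sigma'}(\Omega)$ (an admissible test competitor) yields $\int_\Omega w\psi\,dx=0$, so $w=0$. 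Hence $u$ acquires the second-order Sobolev regularity and vanishing boundary trace required of a strong solution, with $-\Delta u=g(x,v)$ a.e., and the symmetric argument treats $v$ with source $f(x,u)$, giving a strong solution of \eqref{s}. I expect the main obstacle to be the rigorous justification of this last identification: setting up the transposition formulation on the correct test class, verifying that the Calder\'on--Zygmund solution is an admissible competitor, and establishing uniqueness of very weak solutions — this is precisely what ties the abstract critical point $(u,v)$ to a genuine pointwise solution.
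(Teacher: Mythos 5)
Your strategy is sound and in fact supplies a proof where the paper gives none: in the text this lemma is simply quoted as a known regularity result of de Figueiredo and Felmer \cite{Figue-Fel}, so you are reconstructing their argument rather than paralleling anything written in the paper. The decoupling of the weak formulation, the integrability $f(x,u)\in L^{p/(p-1)}(\Omega)$ and $g(x,v)\in L^{q/(q-1)}(\Omega)$ coming from $(H_1)$ and Lemma \ref{compactness}, and the transposition identity $\int_\Omega u\,(-\Delta k)\,dx=\int_\Omega k\,g(x,v)\,dx$ for $k\in E^2=H^2(\Omega)\cap H_0^1(\Omega)$ (using $A^tk\in E^s$ when $s+t=2$, self-adjointness, and $A^sA^t=A^2=-\Delta$ on $E^2$) are all correct, as is the scheme of Calder\'on--Zygmund existence plus uniqueness of very weak solutions by duality.

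There is, however, a concrete gap at the very end. What your argument yields is $u\in W^{2,q/(q-1)}(\Omega)\cap W_0^{1,q/(q-1)}(\Omega)$ (since $-\Delta u=g(x,v)$ with $g(x,v)\in L^{q/(q-1)}$) and $v\in W^{2,p/(p-1)}(\Omega)\cap W_0^{1,p/(p-1)}(\Omega)$, i.e.\ exactly the opposite pairing of exponents to the one in the paper's definition of a strong solution, which demands $u\in W^{2,p/(p-1)}$ and $v\in W^{2,q/(q-1)}$. You assert without comment that you have reached ``the second-order Sobolev regularity required of a strong solution,'' but on a bounded domain $W^{2,a}\subset W^{2,b}$ only when $a\geq b$, so whenever $p\neq q$ exactly one of the two inclusions required by the definition is strictly stronger than what a single Calder\'on--Zygmund pass delivers (if $p>q$ then $q/(q-1)>p/(p-1)$ and your conclusion for $v$ falls short; if $p<q$, the one for $u$ does). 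To close this you must either observe that the pairing in the definition is presumably a misprint --- the natural statement is the one you proved --- or run the bootstrap that is the real content of the cited result: $u\in W^{2,q/(q-1)}\hookrightarrow L^{r}$ with $1/r=1-1/q-2/N<1/p$, precisely because $1/p+1/q>1-2/N$, so $f(x,u)$ gains integrability, hence $v$ improves, and iterating lifts both components into $W^{2,r}$ for every $r<\infty$. That iteration is not cosmetic: Theorem \ref{mainresult} speaks of $\|u_k\|_{L^\infty(\Omega)}$ and $\|v_k\|_{L^\infty(\Omega)}$, which your single-pass regularity does not control. A final small point: the Green identity behind your uniqueness step needs $\widetilde u\in L^2(\Omega)$ and $k\in L^q(\Omega)$; both hold, automatically when $N\leq4$ and thanks to the extra restriction in $(H_1)$ when $N\geq5$, but this verification is exactly where those hypotheses enter and should be made explicit.
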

\par We endow $E=E^s\times E^t$ with the inner product
\begin{equation*}
    \big<(u,v),(\phi,\varphi)\big>_{s\times t}=\big<u,\phi\big>_s+\big<v,\varphi\big>_t,\quad (u,v),(\phi,\varphi)\in E,
\end{equation*}
and the associated norm $\|(u,v)\|_{s\times t}^2=\big<(u,v),(u,v)\big>_{s\times t}$.
\par In the following we assume without loss of generality that $s\geq t$. One can easily verify that $E$ has the orthogonal decomposition \big(with respect to $\big<\cdot,\cdot\big>_{s\times t}$\big) $E=E^+\oplus E^-$, where
\begin{equation}\label{}
    E^+:=\big\{(u,A^{s-t}u)\,|\,u\in E^s\big\}\quad \textnormal{and}\quad E^-:=\big\{(u,-A^{s-t}u)\,|\,u\in E^s\big\}.
\end{equation}
If we denote by $P^\pm:E\to E^\pm$ the orthogonal projections, then a direct calculation yields
\begin{equation}\label{}
    P^\pm(u,v)=\frac{1}{2}\big(u\pm A^{t-s}v,v\pm A^{s-t}u\big),\quad \forall (u,v)\in E,
\end{equation}
and
\begin{equation}\label{phi}
    \Phi(u,v)=\frac{1}{2}\|P^+(u,v)\|_{s\times t}^2-\frac{1}{2}\|P^-(u,v)\|_{s\times t}^2-\int_\Omega\Big(F(x,u)+G(x,v)\Big)dx.
\end{equation}
Since both $E^-$ and $E^+$ are infinite-dimensional, the functional $\Phi$ is strongly indefinite, in the sense that it is neither bounded below nor above, even on subspaces of finite-dimension or finite-codimension. The study of $\Phi$ is therefore quite difficult, because the usual critical point theorems in \cite{R,W} cannot be applied directly.
\par Now we present the generalized variant fountain theorem we will apply in order to prove our main result.\\
\indent Let $Y$ be a closed subspace of a separable Hilbert space $X$ endowed with the inner product $\big<\cdot,\cdot\big>$ and the associated norm $\|\cdot\|$.
We denote by $P:X\rightarrow Y$ and $Q:X\rightarrow Z:=Y^\perp$ the orthogonal projections.\\
 We fix an orthonormal basis $(a_j)_{j\geq0}$ of $Y$, and we consider on $X=Y\oplus Z$ the $\tau$-topology introduced by Kryszewski and Szulkin in \cite{K-S}, that is, the topology associated with the norm
\begin{equation*}
    \vvvert u\vvvert:=\max\Big(\sum\limits_{j=0}^{\infty}\frac{1}{2^{j+1}}|\big<Pu,a_j\big>|,\|Qu\|\Big),\,\, u\in X.
\end{equation*}
Clearly we have $\|Qu\|\leq\vvvert u\vvvert\leq \|u\|$. Moreover, $\tau$ has the property that \big(see \cite{K-S} or \cite{W}\big): If $(u_n)\subset X$ is a bounded sequence, then
\begin{equation*}
     u_n
\stackrel{\tau}{\rightarrow}u  \Longleftrightarrow Pu_n \rightharpoonup Pu \,\ and \,\ Qu_n \rightarrow Qu.
\end{equation*}
Let $(e_j)_{j\geq0}$ be an orthonormal basis of $Z$. We adopt the following notations.
$$Y_k:=Y\oplus(\oplus_{j=0}^k{\mathbb R} e_j),\quad\quad \quad\quad  Z_k:=  \overline{\oplus_{j=k}^\infty\mathbb{R} e_j }, $$
$$ \displaystyle B_k:=\{u\in Y_k \, \bigl | \, ||u||\leq \rho_k \bigr.\}, \,\,\ \textnormal{with}\,\,\,\ \rho_k>0, \,\ k\geq2.   $$
\begin{theo}[Variant fountain theorem, Batkam-Colin \cite{B-C}]\label{vft}
Let the family of $\mathcal{C}^1$-functionals
\begin{equation*}
    \Phi_\lambda:X\rightarrow\mathbb{R},\quad\Phi_\lambda(u):=L(u)-\lambda J(u), \,\,\,\,\,\,\,\,\ \lambda\in[1,2],
\end{equation*}
such that
 \begin{enumerate}
\item [$(A_1)$] $\Phi_\lambda$ maps bounded sets to bounded sets uniformly for $\lambda\in[1,2]$, and $\Phi_\lambda(-u)=\Phi_\lambda(u)$ for every $(\lambda,u)\in[1,2]\times X$.
\item [$(A_2)$] $J(u)\geq0$ for every $u\in X$; $L(u)\rightarrow\infty$ or $J(u)\rightarrow\infty$ as $\|u\|\rightarrow\infty$.
\item [$(A_3)$] For every $\lambda\in[1,2]$, $\Phi_\lambda$ is $\tau$-upper semicontinuous  and $\Phi'_\lambda$ is weakly sequentially continuous.
\end{enumerate}
Let
$\Gamma_k(\lambda)$ be the class of maps $\gamma:B_k\rightarrow X$ such that
\begin{itemize}
  \item [(a)] $\gamma$ is odd and $\tau-$continuous, and $\gamma_{\mid_{\partial B_k}}=id,$
  \item [(b)] every $u\in int(B_k)$ has a $\tau-$neighborhood $N_u$ in $Y_k$ such that $(id-\gamma)(N_u\cap int(B_k))$ is contained in a finite-dimensional subspace of $X$,
   \item [(c)] $\Phi_\lambda(\gamma(u))\leq\Phi_\lambda(u)$ $\forall u\in B_k$.
\end{itemize}
If there are $0<r_k<\rho_k$ such that
\begin{equation*}
      b_k(\lambda) \, := \, \displaystyle \inf_{\substack{u \in Z_k \\ \|u \| = r_k}}\Phi_\lambda(u)\,\,\geq\,\,a_k(\lambda) \, := \, \displaystyle \sup_{\substack{u \in Y_k \\ \|u\| = \rho_k}} \Phi_\lambda(u), \quad\forall\lambda\in[1,2],
\end{equation*}
 then
\begin{equation*}
     c_{k}(\lambda) :=  \inf_{\gamma \in \Gamma_{k}(\lambda)} \sup_{u\in B_{k}} \Phi_\lambda
\bigl( \gamma(u)  \bigr)\,\geq\,b_k(\lambda),\quad \forall\lambda\in[1,2].
\end{equation*}
 Moreover, for a.e $\lambda\in[1,2]$ there exists a sequence $(u_k^n(\lambda))_n\subset X$ such that
\begin{equation*}
    \sup_{\substack{n}}\|u_k^n(\lambda)\|<\infty, \quad \Phi'_\lambda (u_k^n(\lambda))\rightarrow 0 \,\ and \,\ \Phi_\lambda(u_k^n(\lambda))\rightarrow c_k(\lambda)\,\,\textnormal{as}\,\,n\rightarrow\infty.
\end{equation*}
\end{theo}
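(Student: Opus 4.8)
The plan is to establish the two conclusions in turn. The first, $c_k(\lambda)\geq b_k(\lambda)$, is a purely topological linking statement valid for every $\lambda\in[1,2]$; the second, the existence of bounded Palais--Smale sequences for almost every $\lambda$, is obtained by Jeanjean's monotonicity trick \cite{LJJ}. The whole argument is carried out in the $\tau$-topology of \cite{K-S}: because $\Phi_\lambda$ is strongly indefinite, the classical deformation and minimax theorems cannot be invoked, and one works instead with the odd, $\tau$-continuous, locally finite-dimensional perturbations of the identity isolated by conditions $(a)$ and $(b)$, for which both a degree theory and a $\tau$-deformation lemma are available. Hypothesis $(A_3)$ is precisely what makes these tools function: $\tau$-upper semicontinuity controls the sublevel sets, and weak sequential continuity of $\Phi'_\lambda$ is needed to build a $\tau$-admissible pseudo-gradient flow and to pass to the limit.

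For $c_k(\lambda)\geq b_k(\lambda)$ I would prove the intersection lemma that, for every $\gamma\in\Gamma_k(\lambda)$, the image $\gamma(B_k)$ meets the sphere $S_k:=\{u\in Z_k : \|u\|=r_k\}$. Once this is known, choosing $w\in B_k$ with $\gamma(w)\in S_k$ gives $\Phi_\lambda(\gamma(w))\geq\inf_{S_k}\Phi_\lambda=b_k(\lambda)$, so $\sup_{u\in B_k}\Phi_\lambda(\gamma(u))\geq b_k(\lambda)$; taking the infimum over $\gamma$ yields the claim. The intersection is a $\mathbb{Z}_2$-equivariant degree computation adapted to the $\tau$-topology: oddness of $\gamma$ supplies the Borsuk--Ulam hypothesis, the normalization $\gamma|_{\partial B_k}=id$ fixes the behaviour on $\partial B_k$, and property $(b)$ is exactly what permits a Leray--Schauder / Kryszewski--Szulkin degree to be assigned to $id-\gamma$; the linking of $Y_k$ and $Z_k$ (which meet only along $\mathbb{R}e_k$) then forces this degree to be nonzero, hence $\gamma(B_k)\cap S_k\neq\emptyset$.

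For the bounded Palais--Smale sequences I would run the monotonicity trick. Since $J\geq0$, for fixed $u$ the map $\lambda\mapsto\Phi_\lambda(u)$ is non-increasing; together with $id\in\Gamma_k(\lambda)$ (which gives $c_k(\lambda)\leq\sup_{B_k}\Phi_\lambda<\infty$ by $(A_1)$) and the lower bound of Step~1, this shows $c_k$ is finite and monotone, hence differentiable at a.e. $\lambda\in[1,2]$. Fixing such a $\lambda$, I take $\lambda_n\uparrow\lambda$ and maps $\gamma_n\in\Gamma_k(\lambda_n)$ with $\sup_{B_k}\Phi_{\lambda_n}(\gamma_n)\leq c_k(\lambda_n)+(\lambda-\lambda_n)$, and restrict attention to the points $u$ of $B_k$ where $\Phi_\lambda(\gamma_n(u))$ is nearly maximal. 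On this set the identity $\Phi_{\lambda_n}(\gamma_n(u))=\Phi_\lambda(\gamma_n(u))+(\lambda-\lambda_n)J(\gamma_n(u))$ lets one read off $J(\gamma_n(u))\leq \big(c_k(\lambda_n)-c_k(\lambda)\big)/(\lambda-\lambda_n)+2$, whose right-hand side stays bounded because $c_k$ is differentiable at $\lambda$; writing $L=\Phi_{\lambda_n}+\lambda_n J$ then bounds $L$, and hypothesis $(A_2)$ converts boundedness of $L$ and $J$ into boundedness of the norm. This produces a bounded sequence $v_n=\gamma_n(u_n)$ with $\Phi_\lambda(v_n)\to c_k(\lambda)$. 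The final step is a contradiction via the $\tau$-deformation lemma: if no bounded Palais--Smale sequence existed at the level $c_k(\lambda)$, then $\|\Phi'_\lambda\|$ would be bounded away from $0$ on a bounded neighbourhood of the corresponding sublevel set, and one could deform the $\gamma_n$ within the admissible class so as to push $\sup_{B_k}\Phi_\lambda(\gamma_n)$ strictly below $c_k(\lambda)$, contradicting its definition.

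I expect the main obstacle to be twofold and concentrated in the passage to bounded Palais--Smale sequences. First, the deformation lemma must be proved in the $\tau$-topology for an unbounded, strongly indefinite functional: one cannot use \cite{R,W} and must instead construct, using $(A_3)$, a pseudo-gradient vector field whose flow is $\tau$-continuous and respects the finite-dimensional approximation property, so that conditions $(a)$--$(c)$ are preserved. Second, the monotonicity argument is complicated by the fact that the admissible class $\Gamma_k(\lambda)$ itself depends on $\lambda$ through the descent condition $(c)$, whereas Jeanjean's original setting uses a single $\lambda$-independent family; the cleanest repair is to show that $c_k(\lambda)$ coincides with the minimax taken over the $\lambda$-independent subclass defined by $(a)$ and $(b)$ alone, the equality being restored by composing an arbitrary such map with a downward $\tau$-deformation to regain $(c)$ without raising the maximal value. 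Establishing these two facts rigorously is where the real effort lies.
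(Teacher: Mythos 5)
The first thing to note is that the paper contains no proof of this statement at all: Theorem \ref{vft} is imported verbatim from Batkam--Colin \cite{B-C}, and the only internal indication of how it is proved is the remark in the introduction that it ``combines the $\tau$-topology introduced by Kryszewski and Szulkin \cite{K-S} with the monotonicity trick of Jeanjean \cite{LJJ}.'' Your two-step plan is exactly that method, and your first step is sound in outline: the intersection lemma ($\gamma(B_k)\cap S_k\neq\emptyset$ for every admissible $\gamma$, proved by a $\mathbb{Z}_2$-equivariant degree argument in which oddness feeds Borsuk--Ulam, property (b) makes a Kryszewski--Szulkin-type degree available, and $\gamma|_{\partial B_k}=id$ normalizes it) immediately yields $c_k(\lambda)\geq b_k(\lambda)$.

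The genuine gap is in the second step, and it sits exactly where you locate the ``real effort'': the $\lambda$-dependence of $\Gamma_k(\lambda)$. Three assertions in your sketch do not survive scrutiny. First, your claim that $c_k$ is monotone ``since $J\geq0$'' is a non sequitur once the admissible class varies with $\lambda$; pointwise monotonicity of $\lambda\mapsto\Phi_\lambda(u)$ only gives monotonicity of the minimax when the infimum ranges over a fixed class. Second, your Jeanjean computation takes $\gamma_n\in\Gamma_k(\lambda_n)$, $\lambda_n\uparrow\lambda$, and bounds $J(\gamma_n(u))$ by combining $\sup_{B_k}\Phi_{\lambda_n}\circ\gamma_n\leq c_k(\lambda_n)+(\lambda-\lambda_n)$ with the lower bound $\Phi_\lambda(\gamma_n(u))\geq c_k(\lambda)-(\lambda-\lambda_n)$ on the nearly maximal set; the latter presupposes $\sup_{B_k}\Phi_\lambda\circ\gamma_n\geq c_k(\lambda)$, i.e.\ that $\gamma_n$ is a competitor for $c_k(\lambda)$, which means $\Gamma_k(\lambda_n)\subseteq\Gamma_k(\lambda)$. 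But condition (c) does not transfer across parameters: from $L(\gamma(u))-L(u)\leq\lambda_n\,[J(\gamma(u))-J(u)]$ one may deduce $L(\gamma(u))-L(u)\leq\lambda\,[J(\gamma(u))-J(u)]$ (with $\lambda>\lambda_n$) only at points where $J(\gamma(u))\geq J(u)$, and nothing guarantees this. Third, the proposed repair fails: the class defined by (a),(b) alone is \emph{larger} than $\Gamma_k(\lambda)$ (so its minimax is a priori $\leq c_k(\lambda)$, not a ``subclass'' value), and composing such a $\gamma$ with a downward $\tau$-deformation $\eta$ only gives $\Phi_\lambda(\eta(\gamma(u)))\leq\Phi_\lambda(\gamma(u))$, whereas (c) demands $\Phi_\lambda(\eta(\gamma(u)))\leq\Phi_\lambda(u)$; at points where $\Phi_\lambda(\gamma(u))>\Phi_\lambda(u)$ you would need a fixed-time flow to descend below a $u$-dependent level, which cannot be guaranteed (the flow is arbitrarily slow near critical points, and there is no compactness to give a uniform decrease rate). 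Since both the a.e.\ differentiability of $c_k$ and the cross-parameter comparison of competitors rest on this unresolved point, your sketch does not establish the second half of the theorem; this bookkeeping between the classes $\Gamma_k(\lambda)$ is precisely the nontrivial content of the proof in \cite{B-C}, not a routine concatenation of \cite{K-S} and \cite{LJJ}.
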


\section{Proof of the main result}\label{section3}

Throughout this section we assume that $(H_1)-(H_5)$ hold.\\
 We define
\begin{equation*}
    X=E,\quad Y=E^-\quad  Z=E^+,\quad P=P^-\quad\textnormal{and } Q=P^+,
\end{equation*}
where $E=E^s\times E^t$, $E^\pm$ and $P^\pm$ are define in section \ref{section2} above. The functional $\Phi$ in \eqref{phi} then reads
\begin{equation}\label{phi1}
    \Phi(u,v)=\frac{1}{2}\|Q(u,v)\|_{s\times t}^2-\frac{1}{2}\|P(u,v)\|_{s\times t}^2-\int_\Omega\Big(F(x,u)+G(x,v)\Big)dx,
\end{equation}
$\forall (u,v)\in X.$
\par Let the family of functionals $\big\{\Phi_\lambda:X\to\mathbb{R}\,;\,\lambda\in[1,2]\big\}$ defined by
\begin{equation}\label{philambda}
    \Phi_\lambda(u,v)=\frac{1}{2}\|Q(u,v)\|_{s\times t}^2-\lambda\Big[\frac{1}{2}\|P(u,v)\|_{s\times t}^2+\int_\Omega\Big(F(x,u)+G(x,v)\Big)dx\Big].
\end{equation}
A standard argument shows that:
\begin{lem}
The conditions $(A_1)$ and $(A_2)$ of Theorem \ref{vft} are satisfied, with
\begin{equation*}
    L(u,v)=\frac{1}{2}\|Q(u,v)\|_{s\times t}^2,\quad J(u,v)=\frac{1}{2}\|P(u,v)\|_{s\times t}^2+\int_\Omega\Big(F(x,u)+G(x,v)\Big)dx.
\end{equation*}
Moreover, $\Phi'_\lambda$ is given by
\begin{multline}\label{phiprime}
    \big<\Phi'_\lambda(u,v),(h,k)\big>=\big<Q(u,v),(h,k)\big>_{s\times t}\\-\lambda\Big[\big<P(u,v),(h,k)\big>_{s\times t}+\int_\Omega\Big(hf(x,u)+kg(x,v)\Big)dx\Big],
\end{multline}
$\forall (u,v),(h,k)\in X.$
\end{lem}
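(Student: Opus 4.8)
The plan is to treat the three assertions separately, beginning with the formula for $\Phi_\lambda'$, since once differentiability is in hand the remaining verifications are short. First I would record that $\Phi_\lambda = L - \lambda J$ with the stated $L$ and $J$; because the excerpt already establishes that $\Phi$ (and hence each of its pieces) is of class $\mathcal{C}^1$ on $E$ under $(H_1)$, it suffices to compute the G\^ateaux derivative and read off \eqref{phiprime}. The two quadratic terms differentiate by the chain rule together with the self-adjointness of the orthogonal projections $P = P^-$ and $Q = P^+$: for $\psi(w) = \tfrac{1}{2}\|Qw\|_{s\times t}^2$ one has $\langle \psi'(w),(h,k)\rangle = \langle Qw, Q(h,k)\rangle_{s\times t} = \langle Qw,(h,k)\rangle_{s\times t}$, and likewise for the $P$-term. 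The superlinear part differentiates to $\int_\Omega\big(hf(x,u)+kg(x,v)\big)\,dx$; this is precisely where $(H_1)$ and Lemma \ref{compactness} enter, through the continuous embeddings $E^s\hookrightarrow L^p(\Omega)$ and $E^t\hookrightarrow L^q(\Omega)$ guaranteed by the choice \eqref{pq}, which make $(u,v)\mapsto\int_\Omega\big(F(x,u)+G(x,v)\big)\,dx$ a $\mathcal{C}^1$ map whose derivative is the Nemytskii operator associated with $f$ and $g$. Summing the three contributions yields \eqref{phiprime}.

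For $(A_1)$, the symmetry $\Phi_\lambda(-u,-v) = \Phi_\lambda(u,v)$ follows from $(H_5)$: oddness of $f(x,\cdot)$ and $g(x,\cdot)$ forces their primitives $F(x,\cdot)$ and $G(x,\cdot)$ to be even, while the quadratic terms are manifestly even. For the bounded-to-bounded property I would fix $R>0$ and estimate on the ball $\|(u,v)\|_{s\times t}\leq R$. The quadratic terms are bounded by $\tfrac{1}{2}R^2$, and integrating the growth bound in $(H_1)$ gives $0\leq F(x,u)\leq C(|u|+|u|^p)$ and $0\leq G(x,v)\leq C(|v|+|v|^q)$, so that $\int_\Omega(F+G)\,dx \leq C\big(|u|_1+|u|_p^p+|v|_1+|v|_q^q\big)$, which is controlled by $\|(u,v)\|_{s\times t}$ through the continuous embeddings. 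Since $\lambda$ ranges over the compact interval $[1,2]$, all these bounds are uniform in $\lambda$, giving $(A_1)$.

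For $(A_2)$, the nonnegativity $J(u,v)\geq0$ is immediate, since $\tfrac{1}{2}\|P(u,v)\|_{s\times t}^2\geq0$ and $(H_2)$ gives $F,G\geq0$. For the coercivity alternative I would not argue with $L$ and $J$ separately but with their sum: using the orthogonal decomposition $\|(u,v)\|_{s\times t}^2 = \|P(u,v)\|_{s\times t}^2 + \|Q(u,v)\|_{s\times t}^2$ together with $\int_\Omega(F+G)\,dx\geq0$, one obtains
\begin{equation*}
    L(u,v)+J(u,v) = \tfrac{1}{2}\|(u,v)\|_{s\times t}^2 + \int_\Omega\big(F(x,u)+G(x,v)\big)\,dx \geq \tfrac{1}{2}\|(u,v)\|_{s\times t}^2 .
\end{equation*}
Hence $\max\big(L(u,v),J(u,v)\big)\geq\tfrac12\big(L(u,v)+J(u,v)\big)\to\infty$ as $\|(u,v)\|_{s\times t}\to\infty$, which is exactly the content of the ``or'' in $(A_2)$.

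The only genuinely technical point is the $\mathcal{C}^1$ regularity underlying the derivative formula and the uniform boundedness estimate, both of which hinge on staying strictly below the critical Sobolev exponents. The hard part is therefore to check that the exponents $p,q$, together with the admissible splitting $s+t=2$ furnished by \eqref{pq}, keep $E^s\hookrightarrow L^p(\Omega)$ and $E^t\hookrightarrow L^q(\Omega)$ continuous (and that, where compactness is needed later, the strict inequality in Lemma \ref{compactness} can be arranged); everything else is a routine application of the chain rule, of $(H_2)$ and $(H_5)$, and of the orthogonality of the splitting $E=E^+\oplus E^-$.
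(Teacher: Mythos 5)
Your proposal is correct. The paper gives no proof of this lemma at all---it is introduced by ``A standard argument shows that:''---and your argument is precisely that standard argument: differentiate the quadratic forms using self-adjointness of the orthogonal projections $P$ and $Q$, differentiate the integral term as a Nemytskii operator under $(H_1)$ and the embeddings of Lemma \ref{compactness}, get evenness from $(H_5)$, boundedness on balls from the integrated growth bounds, and $J\geq0$ from $(H_2)$.

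The one substantive point, which you handle correctly, is the reading of the ``or'' in $(A_2)$: neither $L$ nor $J$ is coercive by itself here ($L$ vanishes on $Y=E^-$, and $J$ need not blow up along high-frequency sequences in $E^+$), so the condition can only be meant in the sense that boundedness of both $L(u_n)$ and $J(u_n)$ forces boundedness of $(u_n)$. Your pointwise inequality $\max\bigl(L(u,v),J(u,v)\bigr)\geq\tfrac14\|(u,v)\|_{s\times t}^2$ establishes exactly this, which is the form in which $(A_2)$ is actually exploited in the proof of Theorem \ref{vft} to produce bounded Palais--Smale sequences.
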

We first show that condition $(A_3)$ of Theorem \ref{vft} is satisfied.
\begin{lem}
For every $\lambda\in[1,2]$, $\Phi_\lambda$ is $\tau-$upper semicontinous and $\Phi'_\lambda$ is weakly sequentially continuous.
\end{lem}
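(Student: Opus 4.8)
The plan is to exploit the splitting
$\Phi_\lambda(w)=\tfrac12\|Qw\|_{s\times t}^2-\tfrac{\lambda}{2}\|Pw\|_{s\times t}^2-\lambda\Psi(w)$, where $w=(u,v)$ and $\Psi(w):=\int_\Omega\big(F(x,u)+G(x,v)\big)dx\geq0$ by $(H_2)$, and to treat the three pieces separately according to their behaviour under $\tau$-convergence and weak convergence. The decisive analytic input throughout will be the compactness of the embeddings $E^s\hookrightarrow L^p(\Omega)$ and $E^t\hookrightarrow L^q(\Omega)$, which holds because the inequalities in \eqref{pq} are \emph{strict} (Lemma \ref{compactness}).

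For the $\tau$-upper semicontinuity I would take $w_n\stackrel{\tau}{\rightarrow}w$ and note that $\|Qw_n\|\leq\vvvert w_n\vvvert$ stays bounded (in fact $Qw_n\to Qw$), so the only quantity that may blow up is $\|Pw_n\|$. I pass to a subsequence realizing $\limsup_n\Phi_\lambda(w_n)$ and argue by dichotomy. If $\|Pw_n\|$ is unbounded along it, then $\Psi\geq0$ together with the boundedness of $\|Qw_n\|$ forces $\Phi_\lambda(w_n)\to-\infty$, so there is nothing to prove. If $\|Pw_n\|$ stays bounded, then $w_n$ is bounded, and the characterization of $\tau$-convergence on bounded sets gives $Pw_n\rightharpoonup Pw$ and $Qw_n\to Qw$, whence $w_n\rightharpoonup w$ in $E$, i.e. $u_n\rightharpoonup u$ in $E^s$ and $v_n\rightharpoonup v$ in $E^t$. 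The compact embeddings then yield $u_n\to u$ in $L^p$ and $v_n\to v$ in $L^q$, so the growth bound $(H_1)$ gives $\Psi(w_n)\to\Psi(w)$. Combining the continuity of $\tfrac12\|Qw_n\|^2$, the weak lower semicontinuity of the norm applied to $-\tfrac{\lambda}{2}\|Pw_n\|^2$, and the convergence of $\Psi$ gives $\limsup_n\Phi_\lambda(w_n)\leq\Phi_\lambda(w)$.

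For the weak sequential continuity of $\Phi'_\lambda$, I take $w_n=(u_n,v_n)\rightharpoonup w=(u,v)$ in $E$ and test \eqref{phiprime} against a fixed $(h,k)\in E$. The two quadratic contributions $\big<Qw_n,(h,k)\big>_{s\times t}$ and $\big<Pw_n,(h,k)\big>_{s\times t}$ pass to the limit because $P$ and $Q$ are bounded linear operators, hence weakly continuous. For the remaining term I again invoke the compact embeddings to get $u_n\to u$ in $L^p$ and $v_n\to v$ in $L^q$; the growth condition $(H_1)$ makes the Nemytskii operators $u\mapsto f(\cdot,u)$ and $v\mapsto g(\cdot,v)$ continuous from $L^p$ into $L^{p/(p-1)}$ and from $L^q$ into $L^{q/(q-1)}$, so that $f(\cdot,u_n)\to f(\cdot,u)$ and $g(\cdot,v_n)\to g(\cdot,v)$ in these dual spaces, and H\"older's inequality (with $h\in L^p$ and $k\in L^q$ fixed) gives $\int_\Omega hf(x,u_n)\,dx\to\int_\Omega hf(x,u)\,dx$ and the analogous limit for $g$. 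Hence $\big<\Phi'_\lambda(w_n),(h,k)\big>\to\big<\Phi'_\lambda(w),(h,k)\big>$.

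I expect the main obstacle to be the $\tau$-upper semicontinuity rather than the weak continuity of the derivative: one must control a $\tau$-convergent sequence that may be unbounded on the $Y=E^-$ component, and it is precisely the negative sign of $-\tfrac{\lambda}{2}\|Pw\|^2$ together with the sign condition $F,G\geq0$ from $(H_2)$ that rescues the estimate in that case. Everything else reduces to the compactness furnished by the strict inequalities in \eqref{pq} and to the routine continuity of Nemytskii operators under the subcritical growth $(H_1)$.
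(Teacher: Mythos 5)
Your proof is correct and follows essentially the same route as the paper: both arguments use the sign condition $F,G\geq0$ together with the boundedness of the $Q$-component under $\tau$-convergence to reduce to a bounded, hence weakly convergent, sequence, treat the quadratic terms by strong convergence of $Qw_n$ and weak lower semicontinuity of $\|P\cdot\|^2$, and prove weak sequential continuity of $\Phi'_\lambda$ via the compact embedding $X\hookrightarrow L^p(\Omega)\times L^q(\Omega)$, Nemytskii continuity (Theorem A.2 in \cite{W}) and H\"older's inequality. The only (harmless) difference is in the integral term of the upper semicontinuity step: the paper extracts an a.e.\ convergent subsequence and applies Fatou's lemma, which needs only $F,G\geq0$, whereas you use the compactness from the strict inequalities in \eqref{pq} and the growth condition $(H_1)$ to get full convergence of $\int_\Omega\big(F(x,u_n)+G(x,v_n)\big)dx$; both are valid.
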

\begin{proof}
\begin{enumerate}
  \item Let $(u_n,v_n)\stackrel{\tau}{\rightarrow}(u,v)$ in $X$ and $\Phi_\lambda(u_n,v_n)\geq C\in\mathbb{R}$. It follows from the definition of $\tau$ that $(Q(u_n,v_n))_n$ is bounded. Since $F,G\geq0$, we deduce from the inequality $\Phi_\lambda(u_n,v_n)\geq C$ that $(P(u_n,v_n))_n$ is also bounded. Hence, up to a subsequence $(u_n,v_n)\rightharpoonup (u,v)$ in $X$ and  $(u_n,v_n)\to(u,v)$ a.e. in $\Omega$. It follows from Fatou's lemma and the weakly semicontinuity of the norm that $C\leq\Phi_\lambda(u,v)$. Hence, $\Phi_\lambda$ is $\tau-$upper semicontinous.
  \item Assume that $(u_n,v_n)\rightharpoonup (u,v)$ in $X=E^s\times E^t$. By \eqref{pq} and Lemma \ref{compactness}, the inclusion $X\hookrightarrow L^p(\Omega)\times L^q(\Omega)$ is compact. Therefore, $(u_n,v_n)\rightharpoonup (u,v)$ in $L^p(\Omega)\times L^q(\Omega)$. A standard argument based on the H\"{o}lder inequality and Theorem $A.2$ in \cite{W} shows that $\big<\Phi'_\lambda(u_n,v_n),(h,k)\big>\to\big<\Phi'_\lambda(u,v),(h,k)\big>$ for all $(h,k)\in X$. Hence, $\Phi'_\lambda$ is weakly sequentially continuous.
\end{enumerate}
\end{proof}
\par We recall that
\begin{equation*}
    Y=\big\{(u,-A^{s-t}u)\,\big|\,u\in E^s\big\}\quad\textnormal{and}\quad Z=\big\{(u,A^{s-t}u)\,\big|\,u\in E^s\big\}.
\end{equation*}
Let $(a_j)_{j\geq0}$ be an orthonormal basis of $E^s.$ Then $(A^{s-t}a_j)_{j\geq0}$ is an orthonormal basis of $E^t$. We define an orthonormal basis $(e_j)_{j\geq0}$ of $Z$ by setting
\begin{equation*}
    e_j:=\frac{1}{\sqrt{2}}\big(a_j,A^{s-t}a_j\big).
\end{equation*}
Let
\begin{equation*}
    Y_k=Y\oplus\big(\oplus_{j=0}^k\mathbb{R}e_j\big)\quad\textnormal{and}\quad Z_k=\overline{\oplus_{j=k}^\infty\mathbb{R}e_j}.
\end{equation*}
\begin{lem}\label{z}
There exist $(\lambda_n)_{n\geq0}\subset[1,2]$ and $(u_k^n,v_k^n)_{n\geq0}\subset X\backslash\{0\}$ such that
\begin{equation*}
   \lambda_n\to1,\quad \Phi'_{\lambda_n}(u_k^n,v_k^n)=0\quad\textnormal{and}\quad\Phi_{\lambda_n}(u_k^n,v_k^n)=c_k(\lambda_n)
\end{equation*}
for $k$ big enough.
\end{lem}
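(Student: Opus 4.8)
The plan is to apply Theorem \ref{vft} to the family $\Phi_\lambda$ and then to upgrade the bounded Palais--Smale sequences it produces into genuine critical points. Conditions $(A_1)$--$(A_3)$ are already in hand, so the first task is to verify the linking inequality $b_k(\lambda)\ge a_k(\lambda)$ with $0<r_k<\rho_k$, \emph{uniformly} in $\lambda\in[1,2]$. On $Z_k\subset Z$ one has $P(u,v)=0$, so for $\|(u,v)\|_{s\times t}=r$,
$$\Phi_\lambda(u,v)\ge\tfrac12 r^2-2\int_\Omega\big(F(x,u)+G(x,v)\big)\,dx.$$
Integrating the growth in $(H_1)$ gives $F(x,u)\le C(|u|+|u|^p)$ and $G(x,v)\le C(|v|+|v|^q)$, while $(H_2)$ gives $F,G\ge0$. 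Introducing $\gamma_k:=\sup\{|u|_2+|v|_2:(u,v)\in Z_k,\ \|(u,v)\|_{s\times t}=1\}$ and $\beta_k:=\sup\{|u|_p+|v|_q:(u,v)\in Z_k,\ \|(u,v)\|_{s\times t}=1\}$, both of which tend to $0$ as $k\to\infty$ by the compactness in Lemma \ref{compactness}, I would choose $r_k\to\infty$ balancing $r_k^2$ against $\beta_k^p r_k^p+\beta_k^q r_k^q$ (the standard fountain estimate, in which the linear terms $\gamma_k r_k$ are negligible) so that $b_k(\lambda)\to\infty$, in particular $b_k(\lambda)>0$ for $k$ large, uniformly in $\lambda$. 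For $a_k(\lambda)$ I would use that $Q(Y_k)=\mathrm{span}(e_0,\dots,e_k)$ is finite-dimensional and that $(H_3)$ forces $\int(F+G)$ to dominate the quadratic part as $\|(u,v)\|_{s\times t}\to\infty$ on $Y_k$; since $F,G\ge0$ and $\lambda\ge1$, this yields $a_k(\lambda)\le0$ once $\rho_k$ is chosen large, uniformly in $\lambda$.

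Granting the geometry, Theorem \ref{vft} produces, for a.e.\ $\lambda\in[1,2]$, a bounded sequence $w^n=(u^n,v^n)$ with $\Phi'_\lambda(w^n)\to0$ and $\Phi_\lambda(w^n)\to c_k(\lambda)\ge b_k(\lambda)>0$. The core of the argument is to show such bounded Palais--Smale sequences are relatively compact. Passing to a subsequence, $w^n\rightharpoonup w$ in $X$; by the compact embedding $X\hookrightarrow L^p(\Omega)\times L^q(\Omega)$ furnished by \eqref{pq} and Lemma \ref{compactness}, the Nemytskii terms pass to the limit, so $\int h f(x,u^n)$ and $\int k g(x,v^n)$ converge appropriately. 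Testing $\Phi'_\lambda(w^n)\to0$ against $Q(w^n-w)\in Z$ and against $P(w^n-w)\in Y$, the orthogonality of $P$ and $Q$ annihilates the cross terms and leaves $\|Q(w^n-w)\|_{s\times t}^2\to0$ and $\|P(w^n-w)\|_{s\times t}^2\to0$; hence $w^n\to w$ strongly. Weak sequential continuity of $\Phi'_\lambda$ (condition $(A_3)$) then gives $\Phi'_\lambda(w)=0$, and strong convergence gives $\Phi_\lambda(w)=c_k(\lambda)$. Since $c_k(\lambda)>0=\Phi_\lambda(0)$, the critical point $w$ is nonzero.

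Finally, let $\mathcal A\subset[1,2]$ be the full-measure set of $\lambda$ for which the above holds. As $\mathcal A$ has full measure, $1$ is a limit point of $\mathcal A$, so I may pick $\lambda_n\in\mathcal A$ with $\lambda_n\to1$ and, for each $n$, a nonzero critical point $(u_k^n,v_k^n)$ of $\Phi_{\lambda_n}$ with $\Phi_{\lambda_n}(u_k^n,v_k^n)=c_k(\lambda_n)$, for every $k$ large enough that $b_k>0$; this is exactly the asserted sequence. The step I expect to be most delicate is the compactness of bounded Palais--Smale sequences: strong indefiniteness rules out any coercivity argument, so one must rely entirely on the compact embedding and the $P/Q$-splitting to recover strong convergence. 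One must also take care that the geometric estimates $b_k(\lambda)\ge a_k(\lambda)$ and $b_k(\lambda)>0$ hold uniformly over the whole interval $[1,2]$, since Theorem \ref{vft} requires the linking to be valid for the entire family simultaneously.
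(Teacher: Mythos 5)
Your proposal is correct and takes essentially the same route as the paper: verify the uniform linking geometry (superquadraticity from $(H_3)$ plus finite dimensionality of $Q(Y_k)$ to get $a_k(\lambda)\le 0$; tail embedding constants $\beta_k\to 0$ to get $b_k(\lambda)>0$), apply Theorem \ref{vft} to obtain bounded Palais--Smale sequences for a.e.\ $\lambda$, upgrade them to nonzero critical points via the compact embedding and the $P/Q$-splitting, and finally choose $\lambda_n\to 1$ in the full-measure set. The compactness argument you spell out (testing $\Phi'_\lambda(w^n)$ against $Q(w^n-w)$ and $P(w^n-w)$ and using orthogonality) is precisely the ``standard argument'' the paper invokes without detail.
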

\begin{proof}
$(H_3)$ implies that for every $\delta>0$ there is $C_\delta>0$ such that
\begin{equation}\label{superquadratic}
    F(x,u)\geq \delta|u|^2-C_\delta\quad\text{and}\quad G(x,u)\geq \delta|u|^2-C_\delta,\quad \forall (x,u).
\end{equation}
\par Let $z\in Y_k$. Then $z=\big(u,A^{s-t}u\big)+\big(v,-A^{s-t}v\big)$, where $v\in E^s$ and $u\in E^s_k:=\oplus_{j=0}^k\mathbb{R}a_j$.
By \eqref{philambda} we have
\begin{align*}
    \Phi_\lambda(z)&=\|u\|^2_s-\lambda\|v\|^2_s-\lambda\int_\Omega\Big(F(x,u+v)+G(x,A^{s-t}(u-v))\Big)dx\\
&\leq \|u\|^2_s-\|v\|^2_s-\int_\Omega\Big(F(x,u+v)+G(x,A^{s-t}(u-v))\Big)dx\quad(\text{since }\lambda\geq1)\\
&\leq \|u\|^2_s-\|v\|^2_s-\delta\big(|u+v|^2_2+|A^{s-t}(u-v)|^2_2\big)+2C_\delta|\Omega|\quad(\text{in view of \eqref{superquadratic}})\\
&\leq \|u\|^2_s-\|v\|^2_s-C\delta\big(|u+v|^2_2+|u-v|^2_2\big)+2C_\delta|\Omega|\quad(\text{since }E^{s-t}\hookrightarrow L^2(\Omega))\\
&=\|u\|^2_s-\|v\|^2_s-2C\delta\big(|u|^2_2+|v|^2_2\big)+2C_\delta|\Omega| \quad(\text{by the parallelogram identity}).
\end{align*}
Since all the norms are equivalent on the finite-dimensional subspace $E^s_k$, there is a constant $c_1>0$ such that $c_1\|u\|_s\leq|u|_2$. Hence
\begin{equation*}
    \Phi_\lambda(z)\leq (1-c_2\delta)\|u\|^2_s-\|v\|^2_s-2C_\delta|\Omega|.
\end{equation*}
Choose $\delta>\frac{1}{c_2}$. Hence $\Phi_\lambda(z)\to-\infty$ uniformly in $\lambda\in[1,2]$ as $\|z\|_{s\times t}\to\infty$, and consequently $a_k(\lambda)<0$ for $\rho_k$ big enough.
\par Let $z\in Z_k$. Then $z=(u,A^{s-t}u)$ with $u\in \overline{\oplus_{j=k}^\infty\mathbb{R}a_j}$, and
\begin{align*}
    \Phi_\lambda(z)&=\frac{1}{2}\|z\|_{s\times t}^2-\lambda\int_\Omega\Big(F(x,u)+G(x,A^{s-t}u)\Big)dx\\
&\geq\frac{1}{2}\|z\|_{s\times t}^2-2\int_\Omega\Big(F(x,u)+G(x,A^{s-t}u)\Big)dx\quad(\text{since }\lambda\leq2).
\end{align*}
By $(H_1)$ there is a constant $C_1>0$ such that
\begin{equation*}
    |F(x,u)|\leq C_1(1+|u|^p)\quad\text{and}\quad |G(x,A^{s-t}u)|\leq C_1(1+|A^{s-t}u|^q).
\end{equation*}
Hence
\begin{equation*}
    \Phi_\lambda(z)\geq\|u\|^2_s-2C_1|u|_p^p-2C_1|A^{s-t}u|^q_q-4C_1|\Omega|.
\end{equation*}
We define
\begin{equation*}
    \beta_{1,k}:=\sup_{\substack{u\in\overline{\oplus_{j=k}^\infty\mathbb{R}a_j}\\\|u\|_s=1}}|u|_p,\quad \beta_{2,k}:=\sup_{\substack{v\in\overline{\oplus_{j=k}^\infty\mathbb{R}(A^{s-t}a_j)}\\\|v\|_t=1}}|v|_q,
\end{equation*}
and $\beta_k=\max\big\{\beta_{1,k};\beta_{2,k}\big\}$.\\
Then
\begin{equation*}
    \Phi_\lambda(z)\geq\|u\|^2_s-2C_1\beta_k^p\|u\|_s^p-2C_1\beta_k^q\|u\|^q_s-4C_1|\Omega|.
\end{equation*}
We assume without loss of generality that $q\leq p$ and we set
\begin{equation*}
    r_k:=\big(C_1p\beta_k^p\big)^{\frac{1}{2-p}}.
\end{equation*}
Then for $\|z\|_{s\times t}=\sqrt{2}\|u\|_s=r_k$ we have
\begin{equation}\label{betatilde}
    \Phi_\lambda(z)\geq \widetilde{b_k}:=K\beta_k^{\frac{2p}{2-p}}\Big[\Big(\frac{1}{4}-\frac{1}{p(\sqrt{2})^p}\Big)-A\beta_k^{\frac{2(q-p)}{2-p}}\Big]-4C_1|\Omega|,
\end{equation}
where $K,A>0$ are constant.\\
By using the argument in the proof of Lemma $3.8$ in \cite{W}, we easily show that $\beta_{1,k}\to0$ and $\beta_{2,k}\to0$ as $k\to\infty$. This implies that $\widetilde{b_k}\to\infty$ and $\Phi_\lambda(z)\to\infty$ uniformly in $\lambda\in[1,2]$, as $k\to\infty$, whenever $\|z\|_{s\times t}=r_k$. Hence $b_k(\lambda)\geq\widetilde{b_k}>0$ for $k$ big enough.
\par By Theorem \ref{vft}, $c_k(\lambda)\geq b_k(\lambda)$ and for a.e. $\lambda\in[1,2]$ there exists $z^n_k(\lambda)=(u^n_k(\lambda),v^n_k(\lambda))\in X$ such that
\begin{equation*}
    \sup_{\substack{n}}\|z^n_k(\lambda)\|_{s\times t}<\infty,\quad \Phi_\lambda(z^n_k(\lambda))\to c_k(\lambda)\quad\text{and}\quad \Phi'_\lambda(z^n_k(\lambda))\to0,\quad\text{as} \quad n\to\infty,
\end{equation*}
for $k$ big enough.\\
Now a standard argument shows that $(z^n_k(\lambda))_n$ has a convergent subsequence. Therefore, there exists $(u_k(\lambda),v_k(\lambda))\in X$ such that
$\Phi'_\lambda(u_k(\lambda),v_k(\lambda))=0$ and $\Phi_\lambda(u_k(\lambda),v_k(\lambda))=c_k(\lambda)$. It is then easy to conclude.
\end{proof}
\par Next we will show that the sequence $(u_k^n,v_k^n)_n$ above is bounded. The following technical lemma will be very helpful.
\begin{lem}\label{comparaison}
Let $\lambda\in[1,2]$. if $z_\lambda\neq0$ and $\Phi'_\lambda(z_\lambda)=0$, then $\Phi_\lambda(z_\lambda+w)\leq\Phi_\lambda(z_\lambda)$ for every
$w\in\mathbf{Z}_\lambda:=\big\{rz_\lambda+\theta\,|\, r\geq-1,\,\,\theta\in Y\big\}$.
\end{lem}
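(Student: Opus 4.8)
The plan is to reduce the claim to a pointwise convexity-type inequality for the nonlinearities. Write $z_\lambda=(u_0,v_0)$ and set $t:=1+r\ge 0$, so that every point $z_\lambda+w$ with $w=rz_\lambda+\theta\in\mathbf{Z}_\lambda$ equals $tz_\lambda+\theta$ with $t\ge 0$ and $\theta\in Y$. Since $Q\theta=0$ and $Pz_\lambda\in Y$, we have $Q(tz_\lambda+\theta)=tQz_\lambda$ and $P(tz_\lambda+\theta)=tPz_\lambda+\theta$. Expanding $\|tPz_\lambda+\theta\|_{s\times t}^2=t^2\|Pz_\lambda\|_{s\times t}^2+2t\langle Pz_\lambda,\theta\rangle_{s\times t}+\|\theta\|_{s\times t}^2$ and substituting into \eqref{philambda} yields an explicit formula for $\Phi_\lambda(z_\lambda)-\Phi_\lambda(tz_\lambda+\theta)$ in terms of the quadratic forms, the cross term $\langle Pz_\lambda,\theta\rangle_{s\times t}$, and the difference of the integrals $\Psi(z_\lambda)-\Psi(tz_\lambda+\theta)$, where $\Psi(u,v):=\int_\Omega\big(F(x,u)+G(x,v)\big)\,dx$.

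Next I would eliminate the Hilbert-space terms using the critical point equation $\Phi'_\lambda(z_\lambda)=0$. Testing \eqref{phiprime} with $(h,k)=z_\lambda$ gives $\|Qz_\lambda\|_{s\times t}^2-\lambda\|Pz_\lambda\|_{s\times t}^2=\lambda\langle\Psi'(z_\lambda),z_\lambda\rangle$, and testing with $(h,k)=\theta\in Y$ together with the orthogonality $\langle Qz_\lambda,\theta\rangle_{s\times t}=0$ gives $\langle Pz_\lambda,\theta\rangle_{s\times t}=-\langle\Psi'(z_\lambda),\theta\rangle$. Substituting both identities, all purely quadratic contributions collapse and one is left with
\[
\Phi_\lambda(z_\lambda)-\Phi_\lambda(tz_\lambda+\theta)=\tfrac{\lambda}{2}\|\theta\|_{s\times t}^2+\lambda\int_\Omega\big(\mathcal{D}_F+\mathcal{D}_G\big)\,dx,
\]
where, writing $\theta=(\theta_1,\theta_2)$, the integrand is $\mathcal{D}_F=F(x,tu_0+\theta_1)-F(x,u_0)-f(x,u_0)\big[\tfrac{t^2-1}{2}u_0+t\theta_1\big]$ and $\mathcal{D}_G$ is the analogous expression in $g,G,v_0,\theta_2$. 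As $\tfrac{\lambda}{2}\|\theta\|_{s\times t}^2\ge 0$, it suffices to prove the pointwise inequalities $\mathcal{D}_F\ge 0$ and $\mathcal{D}_G\ge 0$.

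The heart of the matter, and the step I expect to be the main obstacle, is the pointwise inequality $\mathcal{D}_F\ge 0$ for all $t\ge 0$ and $\theta_1\in\mathbb{R}$. I would prove it via the substitution $w:=tu_0+\theta_1$, after which $\mathcal{D}_F=F(x,w)-F(x,u_0)-tf(x,u_0)w+\tfrac{t^2+1}{2}u_0f(x,u_0)$ is, for each fixed $w$, a quadratic in $t$ with nonnegative leading coefficient $\tfrac12 u_0f(x,u_0)$ (nonnegative by $(H_2)$). Minimizing this quadratic over all $t\in\mathbb{R}$ (which only strengthens the claim) reduces matters to showing $H_{u_0}(w):=F(x,w)-\tfrac12 m(|u_0|)\,w^2$ is minimized at $|w|=|u_0|$, where $m(|u_0|):=f(x,u_0)/u_0$. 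Here $(H_4)$ enters decisively: since $f$ is odd by $(H_5)$, one checks $f(x,w)/w=m(|w|)$ with $m$ nondecreasing in $|w|$, whence $H_{u_0}'(w)=w\big(m(|w|)-m(|u_0|)\big)$ is $\le 0$ for $0<|w|<|u_0|$ and $\ge 0$ for $|w|>|u_0|$, so $H_{u_0}$ attains its minimum over $w\ne 0$ precisely at $|w|=|u_0|$; moreover $H_{u_0}(0)=0\ge H_{u_0}(u_0)$ again by $(H_2)$, so the minimum over all of $\mathbb{R}$ is $H_{u_0}(u_0)$, giving $\mathcal{D}_F\ge 0$. The degenerate case $u_0=0$ follows directly from $f(x,0)=0$ and $F\ge 0$, and $\mathcal{D}_G\ge 0$ is identical. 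Combining these with $\tfrac{\lambda}{2}\|\theta\|_{s\times t}^2\ge 0$ yields $\Phi_\lambda(z_\lambda)\ge\Phi_\lambda(z_\lambda+w)$ for every $w\in\mathbf{Z}_\lambda$, as claimed.
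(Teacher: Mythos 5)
Your proof is correct, and its first half coincides with the paper's: the paper likewise expands $\Phi_\lambda(z_\lambda+w)-\Phi_\lambda(z_\lambda)$ and cancels the quadratic terms using the critical point equation, only it tests $\Phi'_\lambda(z_\lambda)$ against the single vector $r(\frac{r}{2}+1)z_\lambda+(1+r)\theta$ rather than against $z_\lambda$ and $\theta$ separately; both computations land on the same identity, and in fact your integrand $\mathcal{D}_F$ is the correct form of the paper's, whose displays misprint $\big(r(\frac{r}{2}+1)u_\lambda+(1+r)\theta_1\big)f(x,u_\lambda)$ as $\big((1+r)u_\lambda+\theta_1\big)f(x,u_\lambda)$. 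The genuine divergence is in how the pointwise inequality $\mathcal{D}_F\ge 0$ is established. The paper does not argue directly: following Liu \cite{Liu}, it perturbs the nonlinearity, $f_\varepsilon(x,u)=f(x,u)+\varepsilon u^3$, so that $u\mapsto f_\varepsilon(x,u)/|u|$ becomes strictly increasing, invokes Lemma 2.2 of Szulkin--Weth \cite{S-W} (which requires strict monotonicity) to get the strict pointwise inequality for $f_\varepsilon$, and then lets $\varepsilon\to0$ to recover the non-strict inequality for $f$. You instead prove the inequality from scratch: the quadratic-in-$t$ structure with leading coefficient $\tfrac12 u_0f(x,u_0)\ge 0$ (from $(H_2)$), minimization over $t$, and then the one-variable analysis of $H_{u_0}$ using $(H_4)$ and oddness $(H_5)$. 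Your route buys a self-contained elementary argument valid directly under the non-strict hypothesis $(H_4)$, with no approximation step and no external lemma; the paper's route is shorter on the page precisely because the hard pointwise work is outsourced to \cite{S-W}, with the $\varepsilon$-trick bridging strict and non-strict monotonicity. Two small points you should make explicit: when $u_0\neq0$ but $f(x,u_0)=0$, the quadratic degenerates (both the $t^2$ and $t$ coefficients vanish, so there is no vertex to pass to); the claim is still immediate there because $(H_2)$ forces $F(x,u_0)=0$, whence $\mathcal{D}_F=F(x,w)\ge 0$. Also, under non-strict monotonicity the minimum of $H_{u_0}$ need not be attained \emph{only} at $|w|=|u_0|$; what your argument actually shows --- and all that is needed --- is that $H_{u_0}(w)\ge H_{u_0}(u_0)$ for every $w$.
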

\begin{proof}
Let $z_\lambda=(u_\lambda,v_\lambda)$ and $w=rz_\lambda+\theta$, where $r\geq-1$ and $\theta=(\theta_1,\theta_2)\in Y$.\\
A direct calculation gives
\begin{align*}
    \Phi_\lambda(z_\lambda+w)&-\Phi(z_\lambda)=-\frac{\lambda}{2}\|\theta\|^2_{s\times t}+r\big(\frac{r}{2}+1\big)\|Qz_\lambda\|^2_{s\times t}\\
&-\lambda \Big[r\big(\frac{r}{2}+1\big)\|Pz_\lambda\|^2_{s\times t}+(1+r)\big<Pz_\lambda,\theta\big>_{s\times t}\Big]\\
&-\lambda\int_\Omega\Big(F\big(x,(1+r)u_\lambda+\theta_1\big)-F(x,u_\lambda)\Big)dx\\
&-\lambda\int_\Omega\Big(G\big(x,(1+r)v_\lambda+\theta_2\big)-G(x,v_\lambda)\Big)dx.
\end{align*}
Now $\big<\phi'_\lambda(z_\lambda),r\big(\frac{r}{2}+1\big)z_\lambda+(1+r)\theta\big>=0$ implies
\begin{align*}
    r\big(\frac{r}{2}+1\big)\|Qz_\lambda\|^2_{s\times t}-\lambda \Big[r\big(\frac{r}{2}+1\big)\|Pz_\lambda\|^2_{s\times t}+(1+r)\big<Pz_\lambda,\theta\big>_{s\times t}\Big]=\\
\lambda\int_\Omega\Big[\big((1+r)u_\lambda+\theta_1\big)f(x,u_\lambda)+\big((1+r)v_\lambda+\theta_2\big)g(x,v_\lambda)\Big]dx.
\end{align*}
Hence
\begin{align*}
   \Phi_\lambda(z_\lambda&+w)-\Phi(\lambda)=-\frac{\lambda}{2}\|\theta\|^2_{s\times t}\\
&+\lambda\int_\Omega\Big[\big((1+r)u_\lambda+\theta_1\big)f(x,u_\lambda)+F(x,u_\lambda)-F\big(x,(1+r)u_\lambda+\theta_1\big)\Big]dx\\
&+\lambda\int_\Omega\Big[\big((1+r)v_\lambda+\theta_2\big)g(x,v_\lambda)+G(x,v_\lambda)-G\big(x,(1+r)v_\lambda+\theta_2\big)\Big]dx.
\end{align*}
Following Liu \cite{Liu}, we define for an arbitrary $\varepsilon>0$,
\begin{equation*}
    f_\varepsilon(x,u)=f(x,u)+\varepsilon u^3\quad\text{and}\quad g_\varepsilon(x,u)=g(x,u)+\varepsilon u^3,\quad \forall (x,u)\in \Omega\times\mathbb{R}.
\end{equation*}
Using $(H_4)$, one easily verifies that the mappings $u\mapsto f_\varepsilon(x,u)/|u|$ and $u\mapsto g_\varepsilon(x,u)/|u|$ are strictly increasing in $\mathbb{R}\backslash\{0\}$. It then follows from Lemma $2.2$ in \cite{S-W} that
\begin{equation*}
    \big((1+r)u_\lambda+\theta_1\big)f_\varepsilon(x,u_\lambda)+F_\varepsilon(x,u_\lambda)-F_\varepsilon\big(x,(1+r)u_\lambda+\theta_1\big)<0,
\end{equation*}
\begin{equation*}
    \big((1+r)v_\lambda+\theta_2\big)g_\varepsilon(x,v_\lambda)+G_\varepsilon(x,v_\lambda)-G_\varepsilon\big(x,(1+r)v_\lambda+\theta_2\big)<0,
\end{equation*}
where $F_\varepsilon$ and $G_\varepsilon$ are the primitives of $f_\varepsilon$ and $g_\varepsilon$ respectively. \\
Letting $\varepsilon\to0$, we get
\begin{equation*}
    \big((1+r)u_\lambda+\theta_1\big)f(x,u_\lambda)+F(x,u_\lambda)-F\big(x,(1+r)u_\lambda+\theta_1\big)\leq0,
\end{equation*}
\begin{equation*}
    \big((1+r)v_\lambda+\theta_2\big)g(x,v_\lambda)+G(x,v_\lambda)-G\big(x,(1+r)v_\lambda+\theta_2\big)\leq0.
\end{equation*}
This completes the proof of the lemma.
\end{proof}
\begin{lem}
The sequence $\big(z_k^n=(u_k^n,v_k^n)\big)_n$ obtained in Lemma \ref{z} is bounded.
\end{lem}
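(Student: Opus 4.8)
The plan is to argue by contradiction: fix a large $k$, assume $\big(z_k^n\big)_n$ is unbounded, normalize, and combine the Nehari-type identity $\big<\Phi'_{\lambda_n}(z_k^n),z_k^n\big>=0$, the superquadratic condition $(H_3)$, and the cone maximality from Lemma \ref{comparaison}. Two preliminary facts on the levels are needed. Since $J\geq0$, the map $\lambda\mapsto\Phi_\lambda(u)$ is nonincreasing, hence so is $\lambda\mapsto c_k(\lambda)$; taking $\gamma=\mathrm{id}\in\Gamma_k(\lambda)$ gives $0<b_k(\lambda)\leq c_k(\lambda)\leq c_k(1)\leq\sup_{u\in B_k}\Phi_1(u)=:M_k<\infty$, so that $0<\Phi_{\lambda_n}(z_k^n)=c_k(\lambda_n)\leq M_k$ for all $n$. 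Next, testing \eqref{phiprime} against $z_k^n=(u_k^n,v_k^n)$ and using $\lambda_n\geq1$ together with $uf(x,u)\geq0$ and $vg(x,v)\geq0$ from $(H_2)$, I obtain
\begin{equation*}
\|Qz_k^n\|_{s\times t}^2=\lambda_n\|Pz_k^n\|_{s\times t}^2+\lambda_n\int_\Omega\big(u_k^nf(x,u_k^n)+v_k^ng(x,v_k^n)\big)\,dx\geq\|Pz_k^n\|_{s\times t}^2,
\end{equation*}
whence $\|Qz_k^n\|_{s\times t}^2\geq\tfrac12\|z_k^n\|_{s\times t}^2$. Setting $t_n:=\|z_k^n\|_{s\times t}$, supposing $t_n\to\infty$ along a subsequence, and putting $w_n:=z_k^n/t_n$, I have $\|w_n\|_{s\times t}=1$ and $\|Qw_n\|_{s\times t}^2\geq\tfrac12$; up to a subsequence $w_n\rightharpoonup w$ in $X$ and, by the compact embedding $X\hookrightarrow L^p(\Omega)\times L^q(\Omega)$, $w_n\to w$ in $L^p\times L^q$ and a.e. in $\Omega$.

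In the case $w=(\omega_1,\omega_2)\neq0$, the set $\{\omega_1\neq0\}$ or $\{\omega_2\neq0\}$ has positive measure, and there $|u_k^n|$ (resp. $|v_k^n|$) tends to $\infty$. By $(H_3)$ and Fatou's lemma this forces $\int_\Omega\big(F(x,u_k^n)+G(x,v_k^n)\big)\,dx\big/t_n^2\to\infty$. Dividing $\Phi_{\lambda_n}(z_k^n)=c_k(\lambda_n)$ by $t_n^2$ then sends the right-hand side to $-\infty$, whereas $c_k(\lambda_n)/t_n^2\to0$, a contradiction.

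In the case $w=0$, one has $Qw_n\rightharpoonup0$ in $X$, hence $Qw_n\to0$ in $L^p\times L^q$. Here Lemma \ref{comparaison} enters: for fixed $R>0$,
\begin{equation*}
R\,Qw_n=z_k^n+\Big[\big(\tfrac{R}{t_n}-1\big)z_k^n-\tfrac{R}{t_n}Pz_k^n\Big]\in z_k^n+\mathbf{Z}_{\lambda_n},
\end{equation*}
since $\tfrac{R}{t_n}-1\geq-1$ and $Pz_k^n\in Y$. As $R\,Qw_n\in E^+$, so that $P(R\,Qw_n)=0$, Lemma \ref{comparaison} and $\Phi_{\lambda_n}(R\,Qw_n)\leq\Phi_{\lambda_n}(z_k^n)\leq M_k$ give
\begin{equation*}
\tfrac{R^2}{2}\|Qw_n\|_{s\times t}^2=\Phi_{\lambda_n}(R\,Qw_n)+\lambda_n\int_\Omega\big(F(x,R(Qw_n)_1)+G(x,R(Qw_n)_2)\big)\,dx\leq M_k+o(1),
\end{equation*}
where $o(1)\to0$ as $n\to\infty$ by the $L^p\times L^q$ convergence of $Qw_n$ to $0$ and the continuity of the Nemytskii operators associated with $F$ and $G$. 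Using $\|Qw_n\|_{s\times t}^2\geq\tfrac12$ and letting $R\to\infty$ yields $R^2/4\leq M_k$ for every $R$, which is absurd. Therefore $t_n$ is bounded and the lemma follows.

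The main obstacle is the vanishing case $w=0$: superquadraticity provides no leverage when the normalized sequence converges weakly to $0$, and the boundedness of $c_k(\lambda_n)$ alone is insufficient — this is precisely the point at which the Ambrosetti–Rabinowitz condition \eqref{AR} is classically used to force boundedness. The substitute exploited here is the cone maximality of the critical point $z_k^n$ provided by Lemma \ref{comparaison}, together with the coercive lower bound $\|Qw_n\|_{s\times t}^2\geq\tfrac12$ extracted from $(H_2)$ and $\lambda_n\geq1$; these two ingredients jointly replace \eqref{AR} and close the argument.
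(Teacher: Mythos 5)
Your proof is correct and follows essentially the same strategy as the paper's: contradiction, normalization $w_n=z_k^n/\|z_k^n\|_{s\times t}$, the dichotomy $w\neq0$ (handled by $(H_3)$ and Fatou) versus $w=0$ (handled by applying Lemma \ref{comparaison} to the rays $R\,Qw_n\in z_k^n+\mathbf{Z}_{\lambda_n}$ and killing the nonlinear terms via the compact embedding and Nemytskii continuity). Your two refinements are in fact improvements in rigor: the uniform upper bound $c_k(\lambda_n)\leq M_k:=\sup_{B_k}\Phi_1$ obtained from $\gamma=\mathrm{id}\in\Gamma_k(\lambda)$ avoids the paper's questionable inequality $c_k(2)\geq c_k(\lambda_n)$ (which runs against the monotonicity direction the paper itself uses later, where $\alpha_k\geq c_k(2)$), and your explicit verification that $R\,Qw_n$ lies in the cone $z_k^n+\mathbf{Z}_{\lambda_n}$ makes precise a step the paper leaves implicit.
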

\begin{proof}
We argue by contradiction.\\
 Assume that $(z_k^n)_n$ is unbounded. Then, up to a subsequence $\|z_k^n\|_{s\times t}\to\infty$, as $n\to\infty$. Let $w_k^n=(s_k^n,t_k^n)=z_k^n/\|z_k^n\|_{s\times t}$.
Up to a subsequence we may suppose that $w_k^n\rightharpoonup w_k=(s_k,t_k)$ in $X$ and $w_k^n\to w_k=(s_k,t_k)$ a.e..\\
If $w_k\neq0$, that is, if $s_k\neq0$ or $t_k\neq0$, then $|s_k^n|\|z_k^n\|_{s\times t}\to\infty$ or $|t_k^n|\|z_k^n\|_{s\times t}\to\infty$ as $n\to\infty$.
Now for $k$ large enough we have
\begin{align*}
    0<\frac{c_k(\lambda_n)}{\|z_k^n\|_{s\times t}^2}=\frac{\Phi_{\lambda_n}(z_k^n)}{\|z_k^n\|_{s\times t}^2}&\leq\frac{1}{2}\|Qw_k^n\|_{s\times t}^2-\frac{1}{2}\|Pw_k^n\|_{s\times t}^2\\
&-\int_\Omega\frac{F(x,\|z_k^n\|_{s\times t})}{|s_k^n\|z_k^n\|_{s\times t}|^2}|s_k^n|^2dx-\int_\Omega\frac{G(x,\|z_k^n\|_{s\times t})}{|t_k^n\|z_k^n\|_{s\times t}|^2}|t_k^n|^2dx.
\end{align*}
We then obtain, by using $(H_3)$ and Fatou's lemma the contradiction $0\leq-\infty$.\\
Hence $w_k=0$. Since $\Phi_{\lambda_n}(z_k^n)>0$ and $F,G\geq0$, we have that $\|Qw_k^n\|_{s\times t}\geq\|Pw_k^n\|_{s\times t}$. It is then clear by definition of $w_k^n$ that we cannot have $\|Qw_k^n\|_{s\times t}\to0$, as $n\to\infty$. Hence, there is a constant $\alpha>0$ such that $\|Qw_k^n\|_{s\times t}\geq\alpha$ up to a subsequence. By Lemma \ref{comparaison}, we have for every $r>0$
\begin{align*}
    c_k(2)\geq c_k(\lambda_n)=\Phi_{\lambda_n}(z^n_k)&\geq\Phi_{\lambda_n}(rQw^n_k)\geq\frac{1}{2}\alpha^2r^2\\
&-\lambda_n\int_\Omega\Big(F(x,rQ_1w_k^n)+G(x,rQ_2w_k^n)\Big)dx,\quad(\star)
\end{align*}
where $Qw_k^n=\big(Q_1w_k^n,Q_2w_k^n\big)$.\\
 Now, by Lemma \ref{compactness}, $Qw_k^n\to0$ in $L^p(\Omega)\times L^q(\Omega)$, as $n\to\infty$. It follows from $(H_1)$ and Theorem $A.2$ in \cite{W} that $F(x,rQ_1w_k^n)\to0$ and $G(x,rQ_2w_k^n)\to0$ in $L^1(\Omega)$. By taking the limit $n\to\infty$ in $(\star)$ we obtain
\begin{equation*}
    c_k(2)\geq\frac{1}{2}\alpha^2r^2,\quad\forall r>0.
\end{equation*}
This gives a contradiction if we fix $k$ and let $r\to\infty$.\\
Consequently, the sequence $(z_n^k)_n$ is bounded.
\end{proof}
\par We can now prove Theorem \ref{mainresult}.
\begin{proof}[Proof of Theorem \ref{mainresult}]
We consider the sequence $(z_n^k=(u_n^k,v_n^k))_n$ above. The relation
\begin{align*}
     \big<\Phi'(u_k^n,v_k^n)-\Phi'_{\lambda_n}(u_k^n,v_k^n),(h,k)\big>&=(\lambda_n-1)\big[\big<P(u_k^n,v_k^n),(h,k)\big>_{s\times t}\\
&+\int_\Omega\big(hf(x,u_k^n)+kg(x,v_k^n)\big)dx\big]
\end{align*}
implies that
\begin{equation*}
    \lim\limits_{n\to\infty}\Phi'(u_k^n,v_k^n)=\lim\limits_{n\to\infty}\Phi'_{\lambda_n}(u_k^n,v_k^n)=0.
\end{equation*}
Now, since $(c_k(\lambda_n))_n$ is nondecreasing and bounded from above, there exists $\alpha_k\geq c_k(2)$ such that $c_k(\lambda_n)\to\alpha_k$ as $n\to\infty$. It follows from the equality
\begin{align*}
    \Phi(u_k^n,v_k^n)-\Phi_{\lambda_n}(u_k^n,v_k^n)=(\lambda_n-1)\Big[\frac{1}{2}\|P(u_k^n,v_k^n)\|^2_{s\times t}+\int_\Omega\big(F(x,u_k^n)+G(x,v_k^n)\big)dx\Big]
\end{align*}
that
\begin{equation*}
    \lim\limits_{n\to\infty}\Phi(u_k^n,v_k^n)=\lim\limits_{n\to\infty}\Phi_{\lambda_n}(u_k^n,v_k^n)=\lim\limits_{n\to\infty}c_k(\lambda_n)=\alpha_k.
\end{equation*}
By repeating the argument in the proof of Lemma \ref{z}, we see that there exists $(u_k,v_k)\in X$ such that $\Phi'(u_k,v_k)=0$ and $\Phi(u_k,v_k)=\alpha_k$. But since $\alpha_k\geq c_k(2)\geq b_k(2)\geq\widetilde{b_k}\to\infty$, as $k\to\infty$ \big(where $\widetilde{b_k}$ is defined in \eqref{betatilde}\big), the proof is completed.
\end{proof}
\section{A semilinear elliptic problem}\label{section4}
In this section, we consider the semilinear elliptic problem
\begin{equation}\label{see}
\left\{
  \begin{array}{ll}
    -\Delta u-\mu u=f(x,u),\quad x\in\Omega, & \hbox{} \\
    u=0,\quad\text{on } \partial\Omega& \hbox{}
  \end{array}
\right.
\end{equation}
where $\mu$ is a real parameter. Let $0<\mu_1<\mu_2<\mu_3<\cdots$ be the eigenvalues of the problem
\begin{equation*}
\left\{
  \begin{array}{ll}
    -\Delta u=\mu u,\quad \text{in }\Omega, & \hbox{} \\
    u=0,\quad\text{on } \partial\Omega. & \hbox{}
  \end{array}
\right.
\end{equation*}
We have the following result.
\begin{theo}\label{result2}
Assume that the following conditions are satisfied.
\begin{enumerate}
  \item [$(f_1)$]$f\in\mathcal{C}(\Omega\times\mathbb{R})$ and there is a constant $C>0$ such that $|f(x,u)|\leq C(1+|u|^{p-1}) $, for some
  $2<p<2^\star$, where $2^\star=+\infty$ if $N=1,2$ and $2^\star=2N/(N-2)$ if $N\geq3$.
  \item [$(f_2)$] $\frac{1}{2}uf(x,u)\geq F(x,u)\geq0,\,\, \forall (x,u).$
  \item [$(f_3)$] $F(x,u)/u^2\to\infty$ uniformly in $x$ as $|u|\to\infty$.
  \item [$(f_4)$]  $u\mapsto f(x,u)/|u|$ is increasing in $(-\infty,0)\cup(0,+\infty)$.
  \item [$(f_5)$] $f(x,-u)=-f(x,u)$ for all $(x,u)$.
\end{enumerate}
 If $\mu_k<\mu<\mu_{k+1}$ for some $k\geq1$, then \eqref{see} has infinitely many pairs of solutions $\pm u_k$ such that $|u_k|_\infty\to\infty$ as $k\to\infty$.
\end{theo}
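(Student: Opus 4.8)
The plan is to run, in this scalar setting, the same variational scheme used for Theorem \ref{mainresult}, the essential simplification being that the negative subspace is now finite-dimensional and that, since $p<2^\star$, everything is already well defined on $H_0^1(\Omega)$ (no fractional spaces are needed). I would take $X=H_0^1(\Omega)$ with $\|u\|^2=\int_\Omega|\nabla u|^2\,dx$ and decompose $X=Y\oplus Z$, where $Y=\mathrm{span}\{\varphi_1,\dots,\varphi_k\}$ is the finite-dimensional span of the eigenfunctions for $\mu_1,\dots,\mu_k$ and $Z=Y^\perp$. Writing $u=\sum_j a_j\varphi_j$ one has $\int_\Omega(|\nabla u|^2-\mu u^2)\,dx=\sum_j(\mu_j-\mu)a_j^2$, so with the equivalent norm $\|u\|_\mu^2:=\sum_j|\mu_j-\mu|a_j^2$ (equivalent because $\mu$ is not an eigenvalue) and the orthogonal projections $P:X\to Y$, $Q:X\to Z$, the energy functional becomes
\[
\Phi(u)=\tfrac12\|Qu\|_\mu^2-\tfrac12\|Pu\|_\mu^2-\int_\Omega F(x,u)\,dx.
\]
I would then set $L(u)=\tfrac12\|Qu\|_\mu^2$, $J(u)=\tfrac12\|Pu\|_\mu^2+\int_\Omega F(x,u)\,dx$, consider the family $\Phi_\lambda=L-\lambda J$ for $\lambda\in[1,2]$, and take as orthonormal basis of $Z$ the suitably normalized eigenfunctions $\varphi_{k+1+j}$, $j\geq0$.

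The verification of $(A_1)$--$(A_3)$ proceeds as in Section \ref{section3}: $(f_1)$ together with the (now compact) embedding $H_0^1(\Omega)\hookrightarrow L^p(\Omega)$, $p<2^\star$, shows that $\Phi_\lambda$ maps bounded sets to bounded sets; $(f_5)$ gives evenness; $(f_2)$ gives $J\geq0$; and $L(u)+J(u)\geq\tfrac12\|u\|_\mu^2$ yields the coercivity alternative in $(A_2)$. For $(A_3)$ the $\tau$-upper semicontinuity follows from $F\geq0$, Fatou's lemma and weak lower semicontinuity of the norm, while weak sequential continuity of $\Phi_\lambda'$ follows from the compact embedding via Theorem $A.2$ in \cite{W}; here the argument is even easier than in the system case, since $Y$ is finite-dimensional and $\tau$-convergence on bounded sets reduces to $Pu_n\to Pu$ in norm together with $Qu_n\rightharpoonup Qu$.

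For the linking geometry I would show, using the superquadratic estimate $F(x,u)\geq\delta|u|^2-C_\delta$ from $(f_3)$ and equivalence of norms on the finite-dimensional space $Y_k=Y\oplus(\oplus_{j=0}^k\mathbb{R}e_j)$, that $\Phi_\lambda\to-\infty$ on $Y_k$, whence $a_k(\lambda)<0$ for $\rho_k$ large; and, using $(f_1)$ with the numbers $\beta_k=\sup_{u\in Z_k,\ \|u\|_\mu=1}|u|_p\to0$ (as in Lemma $3.8$ of \cite{W}), that $b_k(\lambda)\geq\widetilde b_k\to\infty$ at a suitable radius $\|u\|_\mu=r_k$. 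Theorem \ref{vft} then produces, for a.e.\ $\lambda\in[1,2]$, a bounded sequence at level $c_k(\lambda)\geq b_k(\lambda)$, which by compactness yields a critical point of $\Phi_\lambda$ at that level. The heart of the matter, and the step I expect to be the main obstacle, is to bound that sequence without the Ambrosetti--Rabinowitz condition. This is handled exactly by the comparison Lemma \ref{comparaison}, whose proof carries over verbatim to the scalar functional once $(f_4)$ is invoked through the perturbation $f_\varepsilon(x,u)=f(x,u)+\varepsilon u^3$ and Lemma $2.2$ of \cite{S-W} to secure $\Phi_\lambda(z_\lambda+w)\leq\Phi_\lambda(z_\lambda)$ on the cone $\{rz_\lambda+\theta:\,r\geq-1,\ \theta\in Y\}$. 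The same contradiction argument then applies: normalizing $w_n=u_n/\|u_n\|_\mu$ with weak limit $w$, the case $w\neq0$ is excluded by $(f_3)$ and Fatou's lemma, and the case $w=0$ by the comparison lemma together with the compact embedding.

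Finally I would let $\lambda_n\to1$ along the admissible values and pass to the limit exactly as in the proof of Theorem \ref{mainresult}, obtaining critical points $u_k$ of $\Phi$ with $\Phi(u_k)=\alpha_k\geq c_k(2)\geq\widetilde b_k\to\infty$; by $(f_5)$ these occur in pairs $\pm u_k$. By standard elliptic regularity (possible since $p<2^\star$) these weak solutions lie in $L^\infty(\Omega)$. To upgrade the conclusion to $|u_k|_\infty\to\infty$ I would argue by contradiction: if $|u_k|_\infty\leq M$ along a subsequence, then $f(\cdot,u_k)$ is bounded in $L^\infty(\Omega)$, so testing the equation with $u_k$ gives $\|u_k\|^2=\mu|u_k|_2^2+\int_\Omega f(x,u_k)u_k\,dx$ bounded, hence $\Phi(u_k)$ bounded, contradicting $\alpha_k\to\infty$. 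This completes the scheme.
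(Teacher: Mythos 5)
Your proposal is correct and follows essentially the same route as the paper, whose own proof of Theorem \ref{result2} is precisely this sketch: transplant the scheme of Theorem \ref{mainresult} to $H_0^1(\Omega)$ with $Y=\mathrm{span}\{e_1,\dots,e_k\}$ finite-dimensional, verify $(A_1)$--$(A_3)$ and the fountain geometry, use the comparison lemma to bound the Palais--Smale sequences, and pass to the limit $\lambda_n\to1$; your added details (including the final $L^\infty$ blow-up by contradiction) are all sound. One small correction: in the Kryszewski--Szulkin topology the convergence is \emph{weak} on the $P$-component and \emph{strong} on the $Q$-component, not the other way around as your parenthetical remark states; since $\dim Y<\infty$ this means bounded $\tau$-convergent sequences actually converge in norm, which only makes the verification of $(A_3)$ easier, and the argument you in fact invoke (Fatou's lemma plus weak lower semicontinuity of the norm) is the correct one.
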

\begin{proof}
 By $(f_1)$, the energy functional associated with \eqref{see} is defined on the Sobolev space $H_0^1(\Omega)$ by
\begin{equation*}
    \Psi(u)=\frac{1}{2}\int_\Omega\big(|\nabla u|^2-\mu u^2\big)dx-\int_\Omega F(x,u)dx.
\end{equation*}
By the Poincar\'{e} inequality, $H_0^1(\Omega)$ is equipped with the inner product
\begin{equation*}
    \big<u,v\big>_0=\int_\Omega \nabla u\nabla vdx,
\end{equation*}
and the associated norm $\|u\|^2_0=\big<u,u\big>_0$.\\
Let $e_1,e_2,e_3,\cdots$ be the orthogonal eigenfunctions in $H_0^1(\Omega)$ corresponding to $\mu_1,\mu_2,\mu_3,\cdots$. If $\mu_k<\mu<\mu_{k+1}$ for some $k\geq1$, then $X=H_0^1(\Omega)$ has the orthogonal decomposition $X=Y\oplus Z$, where
\begin{equation*}
    Y=span\{e_1,e_2,\cdots,e_k\}\quad \text{and} \quad Z=Y^\perp.
\end{equation*}
Clearly, we can define an equivalent inner product in $X$ with associated norm $\|\cdot\|$ such that
\begin{equation*}
    \int_\Omega\big(|\nabla u|^2-\mu u^2\big)dx=\frac{1}{2}\|Qu\|^2-\frac{1}{2}\|Pu\|^2,
\end{equation*}
where $P:X\to Y$ and $Q:X\to Z$ are the orthogonal projections.\\
The functional $\Psi$ above then reads
\begin{equation*}
    \Psi(u)=\frac{1}{2}\|Qu\|^2-\frac{1}{2}\|Pu\|^2-\int_\Omega F(x,u)dx.
\end{equation*}
Let
\begin{equation*}
    \Psi_\lambda(u)=\frac{1}{2}\|Qu\|^2-\lambda\Big[\frac{1}{2}\|Pu\|^2+\int_\Omega F(x,u)dx\Big],\quad \lambda\in[1,2].
\end{equation*}
Evidently, we can equipped $(X,\|\cdot\|)$ with the $\tau-$topology, and it is easy to check that $\Psi_\lambda$ is $\tau-$upper semicontinuous and that $\Psi'_\lambda$ is weakly sequentially continuous.\\
The rest of the proof uses an argument similar to that in the proof of Theorem \ref{mainresult}, which is now simplified since $dim Y<\infty$.
\end{proof}
\begin{rem}
Theorem \ref{result2} extends Theorem 3.2 in \cite{S-W}, where the mapping $u\mapsto f(x,u)/|u|$ was supposed to be strictly increasing in $\mathbb{R}\backslash\{0\}$.
\end{rem}

%
%

%
%
\end{document}